\theoremstyle{definition}
\newtheorem{theorem}{Theorem}[section]
\newtheorem{lemma}[theorem]{Lemma}
\newtheorem{proposition}[theorem]{Proposition}
\newtheorem{corollary}[theorem]{Corollary}
\theoremstyle{definition}
\newtheorem{definition}[theorem]{Definition}
\newtheorem{example}[theorem]{Example}
\newtheorem{examples}[theorem]{Examples}
\newtheorem{remark}[theorem]{Remark}
\newtheorem{remarks}[theorem]{Remarks}
\definecolor{blue-url}{RGB}{0,0,100}
\definecolor{red-url}{RGB}{100,0,0}
\definecolor{green-url}{RGB}{0,100,0}
\renewcommand{\emptyset}{\varnothing}
\renewcommand{\setminus}{\smallsetminus}
\renewcommand{\,}{\kern 0.1em}
\providecommand\llb{\llbracket}
\providecommand\rrb{\rrbracket}
\providecommand{\RR}{\mathbin{R}}
\DeclareMathOperator{\myprod}{\Pi}
\DeclareMathOperator{\hgt}{ht}
\newcommand{\evid}[1]{\textsf{#1}}
\begin{document}
\title{An Abstract Factorization Theorem and Some Applications}
\author{Salvatore Tringali}
\address{School of Mathematical Sciences,
Hebei Normal University | Shijiazhuang, 050024 China}
\email{salvo.tringali@gmail.com}
\urladdr{\href{https://imsc.uni-graz.at/tringali}{https://imsc.uni-graz.at/tringali}}

\subjclass[2010]{Primary 06F05, 13A05, 13F15, 20M13.}

\keywords{ACC, artinian, atoms, chain conditions, DCC, factorization, irreducibles, noetherian, orders, power monoids, preorders, quasi-orders.}

\begin{abstract}
\noindent
We combine the language of monoids with the language of preorders so as to refine some fundamental aspects of the classical theory of factorization and prove an abstract factorization theorem with a variety of applications. In particular, 
we obtain a generalization, from cancellative to Dedekind-finite (commutative or non-commutative) monoids, of a classical theorem on ``atomic factorizations'' that traces back to the work of P.\,M.~Cohn in the 1960s;  recover a theorem of D.\,D.~Anderson and S.~Valdes-Leon on ``irreducible factorizations'' in commutative rings;  improve on a theorem of A.\,A.~Antoniou and the author that characterizes atomicity in certain ``monoids of sets'' naturally arising from additive number theory and arithmetic combinatorics; and give a monoid-theoretic proof that every module of finite uniform dimension over a (commutative or non-commutative) ring $R$ is a direct sum of finitely many indecomposable modules (this is in fact a special case of a more general decomposition theorem for the objects of certain categories with finite products, where the indecomposable $R$-modules are characterized
as the atoms of a suitable ``monoid of modules'').
\end{abstract}
\maketitle
\thispagestyle{empty}

\section{Introduction}
\label{sec:1}
Let $H$ be a monoid (see Sect.~\ref{sec:2.2} for notation and terminology). As usual, a principal right i\-de\-al of $H$ is a set of the form $aH$ (with $a \in H$); and we say that $H$ satisfies the as\-cend\-ing chain condition (ACC) on principal right ideals (ACCPR) if there is no infinite sequence of principal right ideals of $H$ that is (strictly) increasing with respect to inclusion. The ACC on principal left ideals (ACCPL) and the ACC on principal ideals (ACCP) are defined in a similar way, with principal right ideals replaced, resp., by sets of the form $Ha$ (that is, principal left ideals) and sets of the form $HaH$ (that is, principal ideals). 

The ACCPR, the ACCPL, and the ACCP (one and the same condition in the commutative set\-ting) have been the subject of extensive research and are known to play a critical role in the study of the ``arithmetic of monoids and rings''.
More in detail, let an \textsf{atom} of $H$ be a non-unit $a \in H$ such that $a \ne \allowbreak xy$ for all non-units $x, y \in H$; and an \evid{irreducible} of $H$ be a non-unit $a \in H$ such that $a \ne xy$ for all non-units $x, y \in H$ with $HxH \ne \allowbreak HaH \ne \allowbreak HyH$. It is a classical result, henceforth referred to as \emph{Cohn's theorem} for it traces back to P.\,M.~Cohn's work on factorization in the 1960s (see, in particular, Theorem 2.8 in \cite{Co64} and the unnumbered corollary on the bottom of p.~589 in \cite{Co67}), 
that every non-unit in a cancellative monoid satisfying the ACCPR and the ACCPL factors as a finite product of atoms (i.e., the monoid is \evid{atomic}), see \cite[Proposition 0.9.3]{Co06}:
An extension of this result to ``nearly cancellative'' monoids was recently obtained by Y.~Fan et al.~in \cite[Lemma 3.1(1)]{FaGeKaTr17} (the commutative case) and \cite[Theorem 2.28(i)]{Fa-Tr18}.
In a similar vein, it was ap\-par\-ent\-ly first observed by D.\,D.~Anderson and S.~Valdes-Leon in \cite[Theorem 3.2]{AnVL96} that every non-unit of a \emph{commutative} monoid satisfying the ACCP factors into a finite product of irreducibles (Anderson and Valdes-Leon state their theorem only for commutative rings, but the proof carries over verbatim to commutative monoids): 
This extends Cohn's theorem in a different direction than the one taken by Fan et al., since every atom is obviously an irreducible and, as a partial converse, every irreducible in a cancellative commutative monoid is an atom (see Remark \ref{rem:3.7}\ref{rem:3.7(2)} and Corollary \ref{cor:4.4} for a more comprehensive analysis of the relations between atoms and irreducibles).

On the whole, the above results can be regarded as a far-reaching generalization of the Fundamental Theorem of Arithmetic --- that every integer greater than one factors as a product of prime numbers (in an essentially unique way) --- and lie in the foundations of a subfield of algebra known as factorization theory \cite{GeHK06, GeZh19}. But ``factorization theorems'' are common to many other fields:
\begin{enumerate}[label=(\textup{\textsc{f}\arabic{*}})]
\item\label{it:F1} It is basic algebra (see, e.g.,~\cite[Proposition (19.20)]{La01})
that every artinian or noetherian $R$-module 
is an internal direct sum of finitely many indecomposable submodules,
where $R$ is a commutative or non-com\-mu\-ta\-tive ring. (Here as usual, an $R$-module $M$ is \evid{indecomposable} if $M$ is neither a zero $R$-module nor the direct sum of two non-zero $R$-modules.)
\item\label{it:F2} It is folklore (see, e.g.,~\cite[Proposition 2.35]{Rot06}) that every permutation of a finite $k$-element set $X$ factors as a (functional) composition of $k$ or fewer transpositions.
\item\label{it:F3} It is been known since J.\,A.~Erdos' seminal paper \cite{Er68} that every non-invertible matrix in the multiplicative monoid of the ring of $n$-by-$n$ matrices with entries in a field factors as a finite product of idempotent matrices, and later work has revealed that the same holds with fields replaced by a wider class of rings (see \cite[Sect.~1]{CoZaZa18} for a historical overview and recent developments).
\end{enumerate}
Roughly speaking, these results have all in common that they pertain to the \emph{existence} of a factorization of \emph{certain} elements of a monoid into a finite product of other elements that, in a sense, cannot be ``broken up into smaller pieces''. However, there is to date no general theory of factorization that gives shape and substance to this idea, and it is the primary goal of the present paper to start filling the gap.

The plan is as follows. First, we generalize the ordinary notions of unit, atom, and ir\-re\-duc\-i\-ble by pairing a monoid with a preorder (Definitions \ref{def:3.1}, \ref{def:3.4}, and \ref{def:3.6}). Next, assuming
a natural analogue of the ACCP (Definition \ref{def:3.8} and Remark \ref{rem:3.10}\ref{rem:3.10(3)}), we formulate an abstract factorization theorem (Theorem \ref{thm:3.11}) and discuss some of its applications, including the following:
\begin{enumerate}[label=(\textup{\textsc{a}\arabic{*}})]
\item\label{it:C1} A generalization to Dedekind-finite monoids (and, hence, to a variety of non-commutative rings with non-trivial zero divisors), of Anderson and Valdes-Leon's theorem on irreducible factorizations (Corollary \ref{cor:4.1}) and  Cohn's theorem on atomic factorizations (Corollary \ref{cor:4.6}).
\item\label{it:C2} An ``object decomposition theorem'' (Corollary \ref{cor:4.13}) for certain categories with finite products yielding as a special case a monoid-theoretic proof (Corollary \ref{cor:4.14}) that every $R$-module of finite uniform dimension over a possibly non-commutative ring $R$ (so in particular, every artinian or noetherian $R$-module) is a direct sum of fi\-nite\-ly many indecomposable $R$-modules (see Sect.~\ref{sec:4.3} for details and \cite{BaWi13} for a survey on monoid-theoretic methods applied to the study of modules).
\item\label{it:C3} An apparently new and, in a way, more conceptual proof of the folk theorem mentioned in item \ref{it:F2} above, where we characterize the transpositions of a finite set as a sort of ir\-re\-duc\-i\-ble elements associated with the fixed points of a permutation (Example \ref{exa:3.14}). 
\end{enumerate}
Among other things, \ref{it:C1} will allow us to recover Fan et al.'s extension of Cohn's theorem to ``nearly cancellative'' monoids, and to improve on a theorem of A.\,A.~Antoniou and the author \cite[Theorem 3.9]{An-Tr18} that characterizes atomicity in certain ``monoids of sets'' naturally arising from additive number theory and a\-rith\-me\-tic combinatorics: The latter is the content of Sect.~\ref{sec:4.2} (see, more specifically, Theorem \ref{thm:4.12}).

Further applications (especially to ``idempotent factorizations'' in matrix rings as outlined in item \ref{it:F3} above) are considered in a separate paper with L.~Cossu \cite{CoTr21} (see also the open questions in Sect.~\ref{sect:5}).
\section{Preliminaries.}
\label{sec:2}
In this section, we establish notation and terminology used all through the paper. Further notation and terminology, if not explained when first introduced, are standard or should be clear from context.

\subsection{Generalities}\label{sec:2.1}
We assume throughout that all relations are binary; all rings are non-zero, unital, and as\-so\-ci\-a\-tive; and all modules are left unital modules. We will usually be casual about the distinction between ``sets'' and ``classes'', but differentiating between these ``types'' will become relevant in Sect.~\ref{sec:4.3}, where among other things we need to guarantee that every category has a skeleton. With this in mind, we set out from the beginning to use Tarski-Grothendieck set theory as a foundation for the present work: Alternatives are possible (see, e.g., \cite[Sect.~1.7]{Fa19}), but the question is entirely beyond the scope of the paper.

We denote by $\mathbb N$ the (set of) non-negative integers, by $\bf Z$ the integers, and by $\mathbb R$ the real numbers. For all $a, b \in \allowbreak \mathbb R \cup \{\pm \infty\}$, we let $\llb a, b \rrb := \{x \in \mathbb Z\colon a \le x \le b\}$ be the \evid{discrete interval} between $a$ and $b$. Unless a statement to the contrary is made, we reserve the letters $\ell$, $m$, and $n$ (with or without subscripts or su\-per\-scripts) for positive integers; and the letters $i$, $j$, and $k$ for non-negative integers.

Given a set $X$ and an integer $k \ge 0$, 
we write $X^{\times k}$ for the Cartesian product of $k$ copies of $X$ and $|X|$ for the \evid{size} of $X$ (that is, $|X|$ is the number of elements of $X$ when $X$ is finite, and is $\infty$ otherwise). 

\subsection{Monoids.}
\label{sec:2.2}
We take a monoid to be a semigroup with an identity. Unless stated otherwise, monoids will typically be written multiplicatively and need not have any special property (e.g., commutativity). We refer the reader to \cite[Ch.~1]{Ho95} for basic aspects of semigroup theory.

Let $H$ be a monoid with identity $1_H$. An element $u \in H$ is \evid{right-invertible} (resp., \evid{left-invertible}) if $uv = \allowbreak 1_H$ (resp., $vu = 1_H$) for some $v \in H$. We use $H^\times$ for the set of \evid{units} (or \evid{invertible elements}) of $H$, that is, the elements of $H$ that are both left- and right-invertible: This means that $u \in H^\times$ if and only if there is a provably unique $v \in H$, called the \evid{inverse} of $u$ (in $H$) and denoted by $u^{-1}$, such that $uv = \allowbreak vu = 1_H$. It is well known (and elementary) that $H^\times$ is a subgroup of $H$, and we say that $H$ is 
\begin{itemize}[leftmargin=1cm]
\item \evid{reduced} if the only unit of $H$ is the identity, i.e., $H^\times = \{1_H\}$; 
	
\item \evid{cancellative} if $xz \ne yz$ and $zx \ne zy$ for all $x, y, z \in H$ with $x \ne y$;

\item \evid{Dedekind-finite} if every left- or right-invertible element is a unit, or equivalently, if $xy = 1_H$ for some $x, y \in H$ implies that at least one (and hence all) of $x$, $y$, and $yx$ is a unit.
\end{itemize}
The non-zero elements of a domain form a can\-cel\-la\-tive monoid under mul\-ti\-pli\-ca\-tion; and every can\-cel\-la\-tive or commutative monoid is Dedekind-finite (see also Proposition \ref{prop:4.3}\ref{prop:4.3(i)} and Remark \ref{rem:4.9}).

Given $X_1,\, \ldots,\, X_n \subseteq H$, we write $X_1 \cdots X_n$ for the the \evid{setwise product} of $X_1$ through $X_n$, that is, the set $\{x_1 \cdots x_n \colon x_1 \in X_1,\, \ldots,\, x_n \in X_n\} \subseteq H$; note that, if $X_i = \{x_i\}$ for some $i \in \llb 1, n \rrb$ and there is no likelihood of confusion, we will replace the set $X_i$ in the product $X_1 \cdots X_n$ with the element $x_i$. 

In particular, we denote by $X^n$ the setwise product of $n$ copies of a set $X \subseteq H$ and define $\mathrm{Mon}\langle X \rangle_H := X^0 \cup X^1 \cup X^2 \cup \cdots$, where $X^0 := \{1_H\}$. We call  $\mathrm{Mon}\langle X \rangle_H$ the \evid{submonoid} of $H$ \evid{generated by} $X$ and say that $H$ is a \evid{finitely generated} monoid if $H = \mathrm{Mon}\langle Y \rangle_H$ for a finite $Y \subseteq H$.

A \evid{monoid congruence} on $H$ is an equivalence relation $R$ on $H$ such that if $x \RR u$ and $y \RR v$ then $xy \RR uv$. If $R$ is a monoid congruence on $H$, we write $x \equiv y \bmod R$ in place of $x \RR y$ and say that ``$x$ is congruent to $y$ modulo $R\,$''. Consequently, we use $x \not\equiv y \bmod R$ to signify that $(x,y) \notin R$.

\subsection{Presentations}\label{sec:2.3}
In a couple of cases, we will consider monoids defined via generators and relations.
Therefore, we review some basic facts about presentations, cf.~\cite[Sect.~1.5]{Ho95}.

Let $X$ be a fixed set. We denote by $\mathscr F(X)$ the free monoid on $X$; use the symbols $\ast_X$ and $\varepsilon_X$, resp., for the operation and the identity of $\mathscr F(X)$; and refer to an element of $\mathscr F(X)$ as an \evid{$X$-word}, or simply as a \evid{word} if no confusion can arise. 
We recall that $\mathscr F(X)$ consists, as a set, of all finite tuples of elements of $X$; and $\mathfrak u \ast_X \mathfrak v$ is the \evid{con\-cat\-e\-na\-tion} of two such tuples $\mathfrak u$ and $\mathfrak v$. Accordingly, the identity of $\mathscr F(X)$ is the empty tuple (i.e., the unique element of $X^{\times 0}$), herein called the \evid{empty $X$-word}.

We take the \evid{length} of an $X$-word $\mathfrak u$, denoted by $\|\mathfrak u\|_X$, to be the unique non-negative integer $k$ such that $\mathfrak u \in X^{\times k}$; in particular, the empty word is the only $X$-word whose length is zero. Note that, if $\mathfrak u$ is an $X$-word of positive length $k$, then $\mathfrak u = u_1 \ast_X \cdots \ast_X u_k$ for some uniquely de\-ter\-mined $u_1,\, \ldots,\, u_k \in X$.

Given $z \in X$, we let the \evid{$z$-adic valuation} on $X$ be the function $\mathsf{v}_z^X \colon \mathscr F(X) \to \mathbb N$ that maps $\varepsilon_X$ to $0$ and a non-empty $X$-word $u_1 \ast_X \cdots \ast_X u_n$ of length $n$ to the number of indices $i \in \llb 1, n \rrb$ with $u_i = z$.

We will systematically drop the subscript (resp., superscript) $X$ from the above notation when there is no serious risk of ambiguity. As a result, we will write $\ast$ instead of $\ast_X$ and $\mathfrak u^{\ast k}$ for the $k^\mathrm{th}$ power of an $X$-word $\mathfrak u$, so that $
\mathfrak u^{\ast 0} := \varepsilon_X$ and $\mathfrak u^{\ast k} := \mathfrak u^{\ast(k-1)} \ast_X \mathfrak u$ for $k \in \mathbb N^+$.

With these premises in place, let $R$ be a relation on the free monoid $\mathscr F(X)$. We define $R^\sharp$ as the smallest monoid congruence on $\mathscr F(X)$ containing $R$, that is,
\[
R^\sharp := \bigcap\, \bigl\{\rho \subseteq \mathscr F(X) \times \mathscr F(X)\colon \rho \text{ is a monoid congruence on } \mathscr F(X) \text{ with } R \subseteq \rho \bigr\}.
\]
This means that $\mathfrak u \equiv \mathfrak v \bmod R^\sharp$ if and only if there are $\mathfrak z_0,\, \mathfrak z_1, \, \ldots,\, \mathfrak z_n \in \mathscr F(X)$ with $\mathfrak z_0 = \mathfrak u$ and $\mathfrak z_n = \mathfrak v$ such that, for each $i \in \llb 0, n-1 \rrb$, there exist $X$-words $\mathfrak p_i$, $\mathfrak q_i$, $\mathfrak q_i^\prime$, and $\mathfrak r_i$ with the following properties:
\begin{center}
(i) either $\mathfrak q_i = \mathfrak q_i^\prime$, or $\mathfrak q_i \RR \mathfrak q_i^\prime$, or $\mathfrak q_i^\prime \RR \mathfrak q_i$; \hskip 1cm (ii) $\mathfrak z_i = \mathfrak p_i \ast \mathfrak q_i \ast \mathfrak r_i$ and $\mathfrak z_{i+1} = \mathfrak p_i \ast \mathfrak q_i^\prime \ast \mathfrak r_i$.
\end{center}
We denote by $\mathrm{Mon}\langle X \mid R \rangle$ the monoid  obtained by taking the quotient of $\mathscr F(X)$ by the congruence $R^\sharp$.
We write $\mathrm{Mon}\langle X \mid R \rangle$ multiplicatively and call it a (\evid{monoid}) \evid{presentation}; in particular, $\mathrm{Mon}\langle X \mid R \rangle$ is a \evid{finite presentation} if $X$ and $R$ are both finite sets. We refer to the elements of $X$ as the \evid{generators} of the presentation, and to each pair $(\mathfrak q, \mathfrak q^\prime) \in R$ as a \evid{defining relation}. 
If there is no danger of confusion, we identify, as is customary, an $X$-word $\mathfrak z$ with its equivalence class in $\mathrm{Mon}\langle X \mid R \rangle$.

The \evid{left graph} of a presentation $\mathrm{Mon}\langle X \mid R\rangle$ is the undirected multigraph with vertex set $X$ and an edge from $y$ to $z$ for each pair $(y \ast \mathfrak y, z \ast \mathfrak z) \in R$ with $y, z \in X$ and
$\mathfrak y, \mathfrak z \in \mathscr F(X)$; note that this results in a loop when $y = z$, and in multiple (or parallel) edges between $y$ and $z$ if there are two or more defining relations of the form $(y \ast \mathfrak y, z \ast \mathfrak z)$.
The \evid{right graph} of a presentation is defined analogously, using the
right-most (instead of left-most) letters of the words in a defining relation.

A monoid is \evid{Adian} if it is isomorphic to a finite presentation 
whose left and right graphs are \emph{cycle-free}, that is, contain no cycles (including loops). Our interest for Adian monoids stems from the following:

\begin{theorem}
	\label{th:adian-theorem}
	Every Adian monoid embeds into a group
	\textup{(}and hence is cancellative\textup{)}.
\end{theorem}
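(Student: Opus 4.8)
The clause ``hence is cancellative'' requires no work: every submonoid of a group is cancellative (a group is cancellative, and this property passes to submonoids), so I would concentrate entirely on the embedding. Fix a finite presentation $H = \mathrm{Mon}\langle X \mid R\rangle$ whose left and right graphs are cycle-free, and let $G$ be the group defined by the same generators and relations, i.e. the quotient of the free group on $X$ by the normal closure of $\{\mathfrak p\,\mathfrak r^{-1} : (\mathfrak p, \mathfrak r) \in R\}$. There is a canonical monoid homomorphism $\iota\colon H \to G$ carrying the class of an $X$-word to its image, and $G$ is the universal (enveloping) group of $H$ in the obvious sense. The plan is to prove that $\iota$ is injective, which is exactly the statement that $H$ embeds into a group.

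Injectivity of $\iota$ unwinds to a purely combinatorial assertion: if two $X$-words $\mathfrak u$ and $\mathfrak v$ become equal in $G$, then already $\mathfrak u \equiv \mathfrak v \bmod R^\sharp$. Equality in $G$ is witnessed by a (reduced) van Kampen diagram over $\langle X \mid R\rangle$ with boundary label $\mathfrak u\,\mathfrak v^{-1}$, whereas equality in $H$ corresponds to an orientation-coherent \emph{semigroup diagram}, in which every edge is traversed positively in accordance with the derivation description of $R^\sharp$ recalled in Sect.~\ref{sec:2.3}. So the task becomes: starting from a reduced group diagram, produce a semigroup diagram with the same boundary words, i.e. eliminate every ``backward-read'' edge. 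I would carry this out using the planar diagram calculus for semigroup presentations (in the spirit of Remmers), reading the leftmost and rightmost letters of the cells off the left and right graphs of the presentation.

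The hypothesis enters precisely here. In a reduced diagram, a non-eliminable backward edge on an interior path forces, by tracing the leftmost (resp. rightmost) letters of the successive cells abutting that path, a closed walk in the left (resp. right) graph; since both graphs are cycle-free --- no cycles and no loops --- no such configuration can exist. Concretely, I would first extract the two one-sided cancellation laws directly from acyclicity: left-graph acyclicity yields left cancellativity and right-graph acyclicity yields right cancellativity, each by a minimal-counterexample argument on the length of a derivation, stripping a common prefix (resp. suffix) and invoking a topological ordering of the acyclic graph to guarantee termination. I would then bootstrap these cancellation laws, together with the resulting absence of length-collapsing closed diagrams, into the full injectivity of $\iota$.

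The main obstacle is the middle step: making rigorous the claim that a backward edge surviving all reductions produces a genuine cycle in one of the two graphs, and managing the bookkeeping when defining relations have unequal lengths or share a first or last letter (parallel edges, loops, and the interaction between the two graphs). Everything surrounding this --- the reduction to $\iota$, the universal property of $G$, and the final deduction of cancellativity --- is formal, so the entire weight of the theorem rests on translating the cycle-free hypothesis into the nonexistence of these offending subdiagrams.
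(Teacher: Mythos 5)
The paper does not prove this statement at all: it is quoted as Adian's embedding theorem with a citation to \cite[Theorem II.4]{Ad66}, so there is no in-paper argument to compare yours against. Judged on its own terms, your proposal is a strategy outline rather than a proof, and the two places where you defer the work are precisely where the entire difficulty of Adian's theorem lives. First, the ``middle step'' you flag --- showing that a backward-read edge surviving all reductions in a diagram forces a closed walk in the left or right graph --- is not a technical bookkeeping issue to be cleaned up later; it \emph{is} the theorem. Without carrying it out (handling unequal relator lengths, shared first/last letters, parallel edges, and the interaction between the two graphs), nothing has been established beyond a restatement of the standard Remmers/Adian diagrammatic framework.

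Second, and more seriously, your closing ``bootstrap'' is not formal. You propose to derive injectivity of $\iota\colon H \to G$ from the two one-sided cancellation laws plus the absence of certain closed diagrams, describing everything around the middle step as routine. But cancellativity of a monoid does not imply that it embeds into a group (Malcev's classical examples), and a finitely presented cancellative monoid need not inject into the group with the same presentation. So even granting left and right cancellativity, the passage to injectivity of the canonical map into the enveloping group is a second substantive theorem requiring its own diagrammatic argument (this is why Adian's embedding theorem is regarded as strictly harder than his cancellativity theorem). As written, the proposal identifies the right machinery and the right role for the cycle-free hypothesis, but both load-bearing steps are left as acknowledged gaps; since the paper simply cites the result, the appropriate resolution here is either to do likewise or to supply the full Adian/Remmers argument, which is far longer than what you have sketched.
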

The result is attributed to S.\,I.~Adian \cite[Theorem II.4]{Ad66}, and therefore it is commonly referred to as \emph{Adian's embedding theorem}; it will come in useful in Example \ref{exa:4.8}.

\section{Preorders and their Interplay with Monoids}
\label{sec:3}

In the present section, we aim to generalize fundamental aspects of the classical theory of factorization by combining the language of monoids with that of preorders: This will prepare the ground for the abstract factorization theorem (Theorem \ref{thm:3.11}) promised in the introduction. The section also includes a variety of examples that will help illustrate some key points: Certain of these examples are of independent interest and we will return to them later, when discussing applications in Sect.~\ref{sec:4}. 

We start with the following definition (see, e.g., \cite[Definition 3.1]{Ha06} and note that some authors prefer the terms ``pre-order'', ``quasi-order'', or ``quasi-ordering'' to the term ``preorder''):

\begin{definition}\label{def:3.1}
Let $X$ be a set. A \evid{preorder} on $X$ is a relation $R$ on $X$ such that $x \RR x$ for all $x \in X$ (i.e., $R$ is \emph{reflexive}), and $x \RR z$ whenever $x \RR y$ and $y \RR z$ (i.e., $R$ is \emph{transitive}). 

In particular, we say a preorder $R$ on $X$ is \evid{total} if, for all $x, y \in X$, $x \RR y$ or $y \RR x$; and is an \evid{order} if $x \RR y$ and $y \RR x$ imply $x = y$ \textup{(}i.e., $R$ is \emph{antisymmetric}\textup{)}.
\end{definition}

We will usually denote a preorder on a set $X$ by either of the relational symbols $\leq$ and $\preceq\,$, with or without subscripts or superscripts. In particular, we reserve the symbol $\le$, without subscripts or super\-scripts, for the standard order on $\mathbb R \cup \{\pm \infty\}$ and its subsets.

\begin{definition}\label{def:3.2}
Given a preorder $\preceq$ on a set $X$ and elements $x, y \in X$, we say $x$ is \textsf{$\preceq$-equivalent} to $y$ if $x \preceq y \preceq x$, and we write $x \prec y$ to signify that $x \preceq y$ and $y \npreceq x$. Accordingly, we say a sequence $(x_k)_{k \ge 0}$ of elements of $X$ is  \evid{$\preceq$-non-increasing} \textup{(}resp., \evid{$\preceq$-decreasing}\textup{)} if $x_{k+1} \preceq x_k$ \textup{(}resp., $x_{k+1} \prec x_k$\textup{)} for every $k \in \mathbb N$; and is \evid{$\preceq$-non-decreasing} \textup{(}resp., \evid{$\preceq$-increasing}\textup{)} if $x_k \preceq x_{k+1}$ \textup{(}resp., $x_k \prec x_{k+1}$\textup{)} for every $k \in \mathbb N$.
\end{definition}

It is perhaps worth stressing that, for a preorder $\preceq\,$, the condition ``$x \prec y$'' is stronger than ``$x \preceq y$ and $x \ne y\,$''; the two conditions are equivalent if and only if $\preceq$ is an order. Also, note that ``$\preceq$-decreasing'' means ``strictly $\preceq$-decreasing''; similarly, ``$\preceq$-increasing'' means ``strictly $\preceq$-increasing''.

\begin{examples}\label{exa:3.3}
\begin{enumerate*}[resume,label=\textup{(\arabic{*})}]
\item\label{exa:3.3(1)} Let $\preceq$ be a preorder (resp., an order) on a set $X$. The relation $\preceq^\mathrm{op}$ on $X$ defined by taking $x \preceq^\mathrm{op} y$ if and only if $y \preceq x$, is still a preorder (resp., an order) on $X$: We will refer to $\preceq^\mathrm{op}$ as the \evid{dual preorder} (resp., the \evid{dual order}) of $\preceq$, or simply as the \evid{dual} of $\preceq$. It is common to denote the preorder $\preceq^\mathrm{op}$ by the ``dual'' of the relational symbol $\preceq$ (that is, by $\succeq$). However, we will not do so, except for the dual of the standard order $\leq$ on $\mathbb R \cup \{\pm \infty\}$ and its subsets, which, as usual, we denote by $\geq$.
\end{enumerate*}

\vskip 0.05cm

\begin{enumerate*}[resume,label=\textup{(\arabic{*})}]
\item\label{exa:3.3(4)} Given a function $\phi\colon X \to Y$ and a preorder $\preceq$ on $Y$, the relation $\preceq_\phi$ on $X$ defined for all $x, y \in X$ by $x \preceq_\phi y$ if and only if $\phi(x) \preceq \phi(y)$, is a preorder on $X$. We will refer to $\preceq_\phi$ as the \evid{pullback preorder} induced by $\preceq$ through $\phi$ or, more simply, as the $\phi$-pullback of $\preceq$.
\end{enumerate*}
\end{examples}

Here, we are mainly interested in the interaction between preorders and monoids. The basic idea is nothing new (see, e.g., \cite[Sect.~1.2]{Fa19}) and leads straight to the following:

\begin{definition}\label{def:3.4}
We let a \evid{premonoid} be a pair $(H, \preceq$) consisting of a monoid $H$ and a preorder on (the underlying set of) $H$. In particular, we say that the premonoid $(H, \preceq)$ is a \evid{preordered monoid} if $xy \preceq uv$ whenever $x \preceq u$ and $y \preceq v$; 
and a \evid{linearly preordered} monoid if it is a preordered monoid with the further property that $\preceq$ is a total preorder and $x \prec y$ implies $uxv \prec uyv$ for all $u, v \in H$. \evid{Ordered monoids} and \evid{linearly ordered monoids} are defined in a similar fashion, by requiring that the preorder $\preceq$ is an order.
\end{definition}

There are in principle many preorders one can put on a monoid $H$: Most notably, those considered in the next example are of utmost importance in the classical theory of factorization (we will pay special attention to them in Sect.~\ref{sec:4.1}); their duals (in the terminology of Example \ref{exa:3.3}\ref{exa:3.3(1)}) were thoroughly studied by J.\,A.~Green in \cite{Gr51} and hence are often called the \emph{Green preorders}.

\begin{examples}\label{exa:3.5}
\begin{enumerate*}[label=\textup{(\arabic{*})}]
\item\label{exa:3.5(4)} It is a simple exercise to show that the relation $\mid_H$ on $H$ defined for all $x, y \in H$ by $x \mid_H y$ (read ``$x$ divides $y$'') if and only if $y \in HxH$, is a preorder on $H$. We will refer to $\mid_H$ as the \evid{divisibility preorder} on $H$ and write $x \nmid_H y$ (read ``$x$ does not divide $y$'') if $y \notin HxH$. In general, $\mid_H$ is not an order, as seen, e.g., by considering the case where $H^\times$ is a non-trivial group (and noting that every unit of $H$ divides any other). Moreover, $(H, \mid_H)$ need not be a preordered monoid: In the free monoid $\mathscr F(A)$ on the $2$-element set $A = \{a, b\}$, we have that $a \mid_{\mathscr F(A)} a$ and $a \mid_{\mathscr F(A)} b \ast a$, but $a \ast a \nmid_{\mathscr F(A)} a \ast b \ast a$.
\end{enumerate*}

\vskip 0.05cm

\begin{enumerate*}[resume,label=\textup{(\arabic{*})}]
\item\label{exa:3.5(5)} Let  $\vdash_H$ and $\dashv_H$ be, resp., the relations on $H$ defined for all $x, y \in H$ by $x \vdash_H y$ (read ``$x$ divides $y$ from the left'') if and only if $y \in xH$, and $x \dashv_H y$ (read ``$x$ divides $y$ from the right'') if and only if $y \in Hx$. 
It is clear that each of $\vdash_H$ and $\dashv_H$
is a preorder on $H$; and it is a simple exercise to check that the divisibility preorder $\mid_H$ on $H$ is the \textsf{transitive closure} of the relation $\vdash_H \cup \dashv_H$ (recall that a relation on $H$ is nothing else that a subset of $H \times H$), meaning that $x \mid_H y$ if and only if there exist $z_0, \ldots, z_n \in H$ with $z_0 = x$ and $z_n = y$ such that, for every $k \in \llb 0, n-1 \rrb$, either $z_k \vdash_H z_{k+1}$ or $z_k \dashv_H z_{k+1}$.
\end{enumerate*}
\end{examples}

One of the key insights of this whole work is that every premonoid comes with a natural generalization of the notion of unit, which, in turn, results in a natural generalization of the notions of atom and irreducible discussed in the introduction (see Remark \ref{rem:3.7}\ref{rem:3.7(2)}). More precisely, we have the following:

\begin{definition}
\label{def:3.6}
Let $(H, \preceq)$ be a premonoid. An element $u \in H$ is a \evid{$\preceq$-unit} \textup{(}of $H$\textup{)} if $u$ is $\preceq$-equivalent to $1_H$ \textup{(}i.e., $u \preceq 1_H \preceq u$\textup{)}; otherwise, $u$ is a \evid{$\preceq$-non-unit}.
Accordingly, a $\preceq$-non-unit $a \in H$ is called
\begin{itemize}[leftmargin=1cm]
\item a \evid{$\preceq$-irreducible} (of $H$) if $a \ne xy$ for all $\preceq$-non-units $x, y \in H$ with $x \prec a$ and $y \prec a$;
\item a \evid{$\preceq$-atom} if $a \ne xy$ for all $\preceq$-non-units $x, y \in H$;
\item a \evid{$\preceq$-quark} if there exists no $\preceq$-non-unit $b \in H$ with $b \prec a$;
\item a \evid{$\preceq$-prime} if $a \preceq xy$, for some $x, y \in H$, implies $a \preceq x$ or $a \preceq y$.
\end{itemize}
We say that $H$ is \evid{$\preceq$-factorable} if each $\preceq$-non-unit factors as a \textup{(}non-empty, finite\textup{)} product of $\preceq$-ir\-re\-duc\-i\-bles; and \evid{$\preceq$-atomic} if each $\preceq$-non-unit factors as a product of $\preceq$-atoms. 
\end{definition}

It is actually the notion of $\preceq$-irreducible as per the above definition that is central to the study of factorization from the perspective of this work: The other notions are somewhat secondary in importance, though still of interest due, e.g., to the fact that understanding the interrelation between $\preceq$-irreducibles, $\preceq$-atoms, $\preceq$-quarks, and $\preceq$-prime in a specific scenario is often pivotal to a deeper comprehension of various phenomena (see, for instance, Propositions \ref{prop:3.13} and \ref{prop:4.3} and Theorem \ref{thm:4.12}).

\begin{remarks}\label{rem:3.7}
\begin{enumerate*}[label=\textup{(\arabic{*})}]
\item\label{rem:3.7(1)}
The rationale behind Definition \ref{def:3.6} is vaguely reminiscent of certain ideas set forth in \cite{Ba-Sm15}, where, among other things, N.\,R.~Baeth and D.~Smertnig axiomatize a notion of ``divisibility relation'' (ibid., Definition 5.1): Every divisibility relation corresponds to a notion of ``prime-like element'' (ibid., Definition 5.3), similarly to how a preorder $\preceq$ on a monoid $H$ comes by with a corresponding notion of $\preceq$-irreducible, $\preceq$-atom, and so on. But while Baeth and Smertnig's approach is firmly anchored to a classical paradigm of factorization (as seen, e.g., from the critical role that ``ordinary units'' keep playing in their frame\-work), this is not the case with our approach. Moreover, it is arguable that Baeth and Smertnig's notion of prime-like element is not really a generalization of the classical notion of atom, but rather a generalization of the euclidean notion of prime number, which is in turn generalized by the notion of $\preceq$-prime (we will not discuss $\preceq$-primes any further in this work).
\end{enumerate*}

\vskip 0.05cm

\begin{enumerate*}[resume,label=\textup{(\arabic{*})}]
\item\label{rem:3.7(2)} 
Let $H$ be a monoid. In the notation of Example \ref{exa:3.5}, $a \in H$ is a $\mid_H$-irreducible if and only if $a$ is a $\mid_H$-non-unit and $a \ne xy$ for all $\mid_H$-non-units $x, y \in H$ such that $a \nmid_H x$ and $a \nmid_H y$. In general, this is the best (if trivial) characterization of $\mid_H$-irreducibility we can hope for (it goes the same with $\mid_H$-atoms and $\mid_H$-quarks), mainly because there is no sensible way to characterize the $\mid_H$-units without imposing restrictions on $H$. But assume from now on that $H$ is a Dedekind-finite monoid.
\\

\indent{}It is then easily checked that $u \in H$ is a $\vdash_H$-unit (i.e., $1_H \in uH$) if and only if $u$ is a $\dashv_H$-unit (i.e., $1_H \in Hu$), if and only if $u$ is a $\mid_H$-unit (i.e., $1_H \in HuH$), if and only if $u$ is a unit. Therefore, $a \in H$ is a $\vdash_H$-atom (i.e., a $\vdash_H$-non-unit with $a \ne xy$ for all $\vdash_H$-non-units $x,y \in H$) if and only if $a$ is a $\dashv_H$-atom (i.e., a $\dashv_H$-non-unit with $a \ne xy$ for all $\dashv_H$-non-units $x,y \in H$), if and only if $a$ is a $\mid_H$-atom (i.e., a $\mid_H$-non-unit with $a \ne xy$ for all $\mid_H$-non-units $x,y \in H$), if and only if $a$ is an atom (i.e., $a$ is a non-unit and $a \ne xy$ for all non-units $x, y \in H$). Similarly, $a \in H$ is a $\mid_H$-irreducible if and only if $a$ is irreducible (i.e., $a$ is a non-unit such that $a \ne xy$ for all non-units $x, y \in H$ with $HxH \ne HaH \ne HyH$). 
\end{enumerate*}

\vskip 0.05cm

\begin{enumerate*}[resume,label=\textup{(\arabic{*})}]
\item\label{rem:3.7(3)} If the set of atoms of a monoid $H$ is non-empty, then $H$ is Dedekind-finite (see \cite[Lemma 2.2(i)]{Fa-Tr18} for details). It thus follows from item \ref{rem:3.7(2)} that either $H$ has no atoms; or every atom is a $\mid_H$-atom, and vice versa. In particular, $H$ is \evid{atomic} (i.e., every non-unit factors as a product of atoms) if and only if $H$ is $\mid_H$-atomic (as per Definition \ref{def:3.6}) and Dedekind-finite.
\end{enumerate*}

\vskip 0.05cm

\begin{enumerate*}[resume,label=\textup{(\arabic{*})}]
\item\label{rem:3.7(4)} Let $(H, \preceq)$ be a premonoid. It is straightforward from Definition \ref{def:3.6} that, if $a \in H$ is a $\preceq$-atom or a $\preceq$-quark, then $a$ is also a $\preceq$-irreducible. In general, the converse is not true.
\\

\indent{}E.g., let $H$ be the multiplicative monoid of a (commutative or non-commutative) domain $R$. The zero $0_R$ of $R$ is a $\mid_H$-irreducible of $H$, because $0_R \ne xy$ for all $x, y \in R \setminus \{0_R\}$. However, $0_R$ is not a $\mid_H$-atom of $H$, for $0_R$ is not a $\mid_H$-unit and $0_R = 0_R\, 0_R$ (by item \ref{rem:3.7(2)}, the $\mid_H$-units of $H$ are precisely the units, since $H$ is Dedekind-finite). If, in addition, $R$ is not a skew field, then $0_R$ is not a $\mid_H$-quark either: Just let $x$ be a non-zero non-unit of $R$ and consider that $x \mid_H 0_R$ but $0_R \nmid_H x$.
\end{enumerate*}
\end{remarks}

It is a natural question to look for conditions under which the elements of a certain subset $S$ of a monoid $H$ \evid{factor through} the elements of another set $A \subseteq H$, in the sense that $S \subseteq \textup{Mon} \langle A \rangle_H$: Here, we aim to provide a partial answer to this question in the case where, given a preorder $\preceq$ on $H$, we take $S$ to be the set of $\preceq$-non-units (of $H$) and $A$ be either the set of $\preceq$-irreducibles, the set of $\preceq$-atoms, or the set of $\preceq$-quarks. 
Most notably, we aim to obtain \emph{sufficient} conditions for $H$ to be $\preceq$-factorable that extend the ideal-theoretic conditions reviewed in the introduction (see Remark \ref{rem:3.10}\ref{rem:3.10(3)} for additional details).

\begin{definition}\label{def:3.8}
We say that a preorder $\preceq$ on a set $X$
\begin{itemize}
\item is \evid{artinian} or satisfies the \evid{descending chain con\-di\-tion} \textup{(DCC)} if, for every $\preceq$-non-increasing sequence $(x_k)_{k \ge 0}$ of elements of $X$, there exists $k' \in \mathbb N$ such that, for $k \ge k'$, $x_k \preceq x_{k+1}$;
\item is \evid{noetherian} 
or satisfies the \evid{ascending chain condition} \textup{(ACC)} 
if the dual $\preceq^\mathrm{op}$ of $\preceq$ is artinian.
\end{itemize}
We call a premonoid $(H, \preceq)$ \evid{artinian} (resp., \evid{noetherian}) if the preorder $\preceq$ is artinian (resp., noetherian).
\end{definition}

In other terms, a preorder $\preceq$ on a set $X$ is artinian (resp., noetherian) if and only if there is no sequence $(x_k)_{k \ge 0}$ of elements of $X$ with $x_{k+1} \prec x_k$ (resp., $x_k \prec x_{k+1}$) for all $k \in \mathbb N$. See, e.g., \cite[Ch.~8, Definition 1.1]{Ha06}, where the term ``well-founded'' is however preferred to the term ``artinian''; or \cite[Definition 2.2]{Gal91}, where the term ``noetherian'' is used in a way that is dual to how we use it here.

In the remainder, we will employ the word ``artinianity'' (resp., ``noetherianity'') to refer to the prop\-er\-ty that a preorder or a premonoid is artinian (resp., noetherian).

\begin{remark}\label{rem:3.10}
\begin{enumerate*}[label=\textup{(\arabic{*})}]
\item\label{rem:3.10(1)}
Let $\preceq$ be a preorder on a set $X$ and assume there is a function $\lambda$ from $X$ to (a subset of) $ \mathbb N \cup \{\infty\}$ such that $\lambda(x) < \lambda(y)$ whenever $x \prec y$. Then $\preceq$ is artinian, or else there would exist a se\-quence $(N_k)_{k \ge 0}$ of non-negative integers with $N_{k+1} < N_k$ for each $k \in \mathbb N$ (absurd). In particular, note that, if $H$ is \emph{acyclic} as per Definition \ref{def:4.2} and $\preceq$ is the divisibility preorder $\mid_H$, then $\lambda$ is a \emph{length function} in the sense of \cite[Definition 1.1.3.2]{GeHK06} (the commutative case) and \cite[Definition 2.26]{Fa-Tr18}.
\end{enumerate*}

\vskip 0.05cm

\begin{enumerate*}[resume,label=\textup{(\arabic{*})}]
\item\label{rem:3.10(1b)}
Every preorder $\preceq$ on a finite set $X$ is artinian. In fact, let $\lambda$ be the function $X \to \mathbb N$ that maps an element $x \in X$ to the largest $k \in \mathbb N$ for which there are $x_0,\, \ldots,\, x_k \in X$ with $x_0 = x$ and $x_{i+1} \prec x_i$ for each $i \in \llb 0, k-1 \rrb$. Since $\prec$ is a transitive relation on $X$ and $x \prec y$ implies $x \ne y$, the fi\-nite\-ness of $X$ guarantees the well-definiteness of $\lambda$ (by the Pigeonhole Principle). It is then clear by construction that $x \prec y$ yields $\lambda(x) < \lambda(y)$. So, by item \ref{rem:3.10(1)}, $\preceq$ is artinian (as wished).
\end{enumerate*}

\vskip 0.05cm

\begin{enumerate*}[resume,label=\textup{(\arabic{*})}]
\item\label{rem:3.10(2)} Assume that $\preceq$ is an artinian preorder on a set $X$, and let $S$ be a non-empty subset of $X$. Then it is well known that $S$ has at least one \evid{$\preceq$-minimal element}, meaning that there exists $\bar{x} \in S$ such that, if $y \preceq \bar{x}$ for some $y \in S$, then $\bar{x} \preceq y$: We include the short proof here for the sake of completeness (the argument relies on the Axiom of Choice, which is actually a theorem \cite[Sect.~2]{Tar38} in Tarski-Grothendieck set theory --- i.e., in the axiomatic system we are using as a foundation, as explained in Sect.~\ref{sec:2.1}).
\\[0.1em]\indent{}

To begin, choose an arbitrary $x \in S$ (this is possible because $S$ is not the empty set). Next, recursively define an $S$-valued sequence $(x_k)_{k \ge 0}$ as follows: Start with $x_0 := x$. If, for some $k \in \mathbb N$, $x_k$ is not a $\preceq$-min\-i\-mal element of $S$, then pick $y \in S$ such that $y \prec x_k$ and set $x_{k+1} := y$; otherwise, set $x_{k+1} := x_k$. Since $\preceq$ is assumed to be artinian, there exists $k_0 \in \mathbb N$ such that $x_{k+1}$ is $\preceq$-equivalent to $x_k$ for every $k \ge k_0$; and by construction of the sequence $(x_k)_{k \ge 0}$, this means that $x_{k_0}$ is a $\preceq$-minimal element of $S$.
\end{enumerate*}

\vskip 0.05cm

\begin{enumerate*}[resume,label=\textup{(\arabic{*})}]
\item\label{rem:3.10(3)} 
In the notation of items \ref{exa:3.5(4)} and \ref{exa:3.5(5)} of Example \ref{exa:3.5}, $H$ satisfies the ACCP (as formulated in the first paragraph of the introduction) 
if and only if the divisibility preorder $\mid_H$ is  artinian, while $H$ satisfies the ACCPR (resp., ACCPL) if and only if the ``divides from the left'' preorder $\vdash_H$ (resp., the ``divides from the right'' preorder $\dashv_H$) is artinian. 
\end{enumerate*}
\end{remark}

At long last, we are ready for the main 
result of the paper: The statement is quite general and, because of its very generality, rather easy to prove.
\begin{theorem}\label{thm:3.11}
Let $\preceq$ be an artinian preorder on a monoid $H$. Then every $\preceq$-non-unit of $H$ factors as a product of $\preceq$-irreducibles \textup{(}namely, $H$ is $\preceq$-factorable\textup{)}.
\end{theorem}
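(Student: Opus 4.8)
The plan is to run a minimal-counterexample argument powered by the existence of $\preceq$-minimal elements in an artinian preorder (Remark~\ref{rem:3.10}\ref{rem:3.10(2)}). Let $S$ denote the set of all $\preceq$-non-units of $H$ that do \emph{not} factor as a (non-empty, finite) product of $\preceq$-irreducibles, and suppose toward a contradiction that $S \ne \emptyset$. Since $\preceq$ is artinian, $S$ admits a $\preceq$-minimal element $a$, that is, an element $a \in S$ such that every $b \in S$ with $b \preceq a$ is in fact $\preceq$-equivalent to $a$.

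Next I would observe that $a$ cannot itself be a $\preceq$-irreducible: otherwise $a$ would be a one-term product of $\preceq$-irreducibles, contradicting $a \in S$. Unwinding the definition of $\preceq$-irreducible (Definition~\ref{def:3.6}), the failure of the $\preceq$-non-unit $a$ to be $\preceq$-irreducible yields $\preceq$-non-units $x, y \in H$ with $a = xy$, $x \prec a$, and $y \prec a$.

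The crux of the argument is to show that neither $x$ nor $y$ lies in $S$. Here I would use $x \prec a$, which means $x \preceq a$ but $a \npreceq x$; in particular $x$ is not $\preceq$-equivalent to $a$. Were $x$ in $S$, the $\preceq$-minimality of $a$ together with $x \preceq a$ would force $a \preceq x$, contradicting $a \npreceq x$. Hence $x \notin S$, and symmetrically $y \notin S$. Since $x$ and $y$ are $\preceq$-non-units lying outside $S$, each factors as a product of $\preceq$-irreducibles; concatenating these two (non-empty) factorizations exhibits $a = xy$ itself as such a product, contradicting $a \in S$. Therefore $S = \emptyset$, which is exactly the assertion that $H$ is $\preceq$-factorable.

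I expect no serious obstacle here — the statement is, as the authors note, easy precisely because the hard content has been absorbed into the abstract artinianity hypothesis. The only points demanding care are the correct negation of the definition of $\preceq$-irreducible, so that one really obtains factors lying \emph{strictly} below $a$ (not merely distinct from $a$, which for a bare preorder is a strictly weaker condition), and the use of the strictness $x \prec a$ to rule out $\preceq$-equivalence of $x$ to $a$, which is exactly what licenses the appeal to minimality.
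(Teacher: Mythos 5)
Your proof is correct and follows essentially the same route as the paper's: a minimal-counterexample argument on the set of $\preceq$-non-units admitting no factorization into $\preceq$-irreducibles, using the existence of $\preceq$-minimal elements guaranteed by Remark~\ref{rem:3.10}\ref{rem:3.10(2)}. Your added care in spelling out why $x \prec a$ rules out $\preceq$-equivalence of $x$ to $a$ (and hence membership of $x$ in the exceptional set) is exactly the point the paper's proof leaves implicit.
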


\begin{proof}
Let $\Omega$ be the set of $\preceq$-non-units of $H$ that do not factor as a product of $\preceq$-irreducibles, and suppose for a contradiction that $\Omega$ is non-empty. By Remark \ref{rem:3.10}\ref{rem:3.10(2)}, $\Omega$ has a $\preceq$-minimal element $\bar{x}$. In particular, $\bar{x}$ is a $\preceq$-non-unit, but not a $\preceq$-irreducible. So, $\bar{x} = yz$ for some $\preceq$-non-units $y, z \in H$ with $y \prec \bar{x}$ and $z \prec \bar{x}$. 
But this is only possible if $y \notin \Omega$ and $z \notin \Omega$, since $\bar{x}$ is a $\preceq$-minimal element of $\Omega$. Therefore,
each of $y$ and $z$ factors as a product of $\preceq$-irreducibles; whence the same is also true for $\bar{x}$ (absurd). 
\end{proof}

Theorem \ref{thm:3.11} applies to a wide range of different situations. But before turning to applications, we aim to show that, in addition to the mere existence of certain factorizations, one can say a little more about the ``arithmetic of a premonoid\,'' $(H, \preceq)$ when $H$ and $\preceq$ are related by a condition that, while much stronger than artinianity, is often met in practice.

\begin{definition}\label{def:3.12}
Given a premonoid $(H, \preceq)$ and an element $x \in H$, we denote by $\hgt_\preceq^H(x)$ the supremum of the set of all $n \in \mathbb N^+$ for which there exist $\preceq$-non-units $x_1, \,\ldots,\, x_n \in H$ with $x_1 = x$ and $x_{i+1} \prec x_i$ for each $i \in \llb 1, n-1 \rrb$, where $\sup \emptyset := 0$. We call $\hgt_\preceq^H(x)$ the \evid{$\preceq$-height} of $x$ (relative to the monoid $H$) and say $(H, \preceq)$ is a \evid{strongly artinian} premonoid if $\hgt_\preceq^H(y) < \infty$ for every $y \in H$. We will usually write $\hgt(\cdot)$ in place of $\hgt_\preceq^H(\cdot)$ if no confusion can arise.
\end{definition}

Definition \ref{def:3.12} is resonant with the notions of ``ideal height'' and ``Krull dimension'' in ring theory: This is no coincidence and we hope to discuss the details in future work.

\begin{proposition}\label{prop:3.13}
Let $(H, \preceq)$ be a strongly artinian premonoid and suppose that, for each $x \in H$ that is neither a $\preceq$-unit nor a $\preceq$-quark, there are $\preceq$-non-units $y, z \in H$ with $y \preceq x$ and $z \preceq x$ such that $x = yz$ and $\hgt(y) + \hgt(z) \le \hgt(x)$. The following hold:
	
\begin{enumerate}[label=\textup{(\roman{*})}]
\item\label{prop:3.13(i)} The preorder $\preceq$ is artinian, the monoid $H$ is $\preceq$-factorable, and every $\preceq$-irreducible is a $\preceq$-quark.
\item\label{prop:3.13(ii)} Every $\preceq$-non-unit $x \in H$ factors into a non-empty product of $\hgt(x)$ or fewer $\preceq$-quarks.
\end{enumerate}
\end{proposition}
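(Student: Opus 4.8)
The plan is to bootstrap everything from strong artinianity after first recording the basic dictionary between the height function and the notions of Definition~\ref{def:3.6}. Unwinding the definition of $\hgt$, one has $\hgt(x) = 0$ exactly when $x$ is a $\preceq$-unit, $\hgt(x) \ge 1$ exactly when $x$ is a $\preceq$-non-unit, and $\hgt(x) = 1$ exactly when $x$ is a $\preceq$-quark (a non-unit admits no strictly $\prec$-smaller non-unit iff all its descending chains of non-units have length one). I would also set up the one auxiliary monotonicity fact on which the argument turns: if $x$ and $y$ are $\preceq$-non-units that are $\preceq$-equivalent, then $\hgt(x) = \hgt(y)$, proved by transporting a descending chain of non-units through $x$ to one of the same length through $y$ via the equivalence.

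Next I would show that $\preceq$ is artinian. A word of caution is needed here: one cannot simply invoke Remark~\ref{rem:3.10}\ref{rem:3.10(1)} with $\lambda = \hgt$, because $\hgt$ need not be strictly $\prec$-monotone --- a non-unit can be $\prec$-below a unit, which makes $\hgt$ jump the wrong way. Instead I would argue directly: given a hypothetical $\preceq$-decreasing sequence $(x_k)_{k \ge 0}$, transitivity of $\prec$ yields $x_j \prec x_i$ for all $j > i$, and two distinct terms cannot both be $\preceq$-units (any two $\preceq$-units are mutually $\preceq$-equivalent, which is incompatible with $\prec$). Hence all but at most one term is a non-unit, so a tail of non-units produces an infinite descending chain of non-units below its first element, forcing that element to have infinite height and contradicting strong artinianity. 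With $\preceq$ artinian, $\preceq$-factorability of $H$ is then immediate from Theorem~\ref{thm:3.11}.

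I would then prove the quantitative statement (ii) by strong induction on $\hgt(x)$ for a $\preceq$-non-unit $x$. When $\hgt(x) = 1$ the element $x$ is itself a $\preceq$-quark and the claim is trivial. When $\hgt(x) \ge 2$, the element $x$ is neither a $\preceq$-unit nor a $\preceq$-quark, so the hypothesis supplies $\preceq$-non-units $y, z$ with $x = yz$ and $\hgt(y) + \hgt(z) \le \hgt(x)$; since $y$ and $z$ are non-units, $\hgt(y), \hgt(z) \ge 1$, whence each is strictly smaller than $\hgt(x)$, the induction hypothesis applies to both, and concatenating their quark-factorizations writes $x$ as a non-empty product of at most $\hgt(y) + \hgt(z) \le \hgt(x)$ quarks.

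Finally, for the remaining assertion of (i) --- that every $\preceq$-irreducible is a $\preceq$-quark --- I would argue by contradiction. If $a$ is $\preceq$-irreducible but not a quark, then $\hgt(a) \ge 2$, so the hypothesis gives $a = yz$ with non-units $y, z$ satisfying $y \preceq a$, $z \preceq a$, and $\hgt(y) + \hgt(z) \le \hgt(a)$. The main obstacle is the gap between what the hypothesis delivers, namely $y \preceq a$ and $z \preceq a$, and what $\preceq$-irreducibility requires, namely the strict relations $y \prec a$ and $z \prec a$; this is exactly where the monotonicity fact is spent. Indeed $\hgt(z) \ge 1$ forces $\hgt(y) < \hgt(a)$, so $y$ cannot be $\preceq$-equivalent to $a$ (equivalent non-units share the same height), and since $y \preceq a$ this upgrades to $y \prec a$, with $z \prec a$ following symmetrically. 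Then $a = yz$ with $\preceq$-non-units $y \prec a$ and $z \prec a$ contradicts the $\preceq$-irreducibility of $a$, so $\hgt(a) = 1$ and $a$ is a $\preceq$-quark. (As a sanity check, since $\preceq$-quarks are $\preceq$-irreducibles by Remark~\ref{rem:3.7}\ref{rem:3.7(4)}, statement (ii) on its own already re-proves $\preceq$-factorability.)
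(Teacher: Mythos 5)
Your proof is correct and, in outline, identical to the paper's: part \ref{prop:3.13(ii)} is the same strong induction on $\hgt(x)$, and the ``irreducible $\Rightarrow$ quark'' step is the same upgrade of $y \preceq x$ to $y \prec x$ via the observation that $\preceq$-equivalent non-units have equal height while $\hgt(z) \ge 1$ forces $\hgt(y) < \hgt(x)$. The one place where you genuinely depart from the paper is the artinianity claim, and your caution there is well-founded: the paper \emph{does} invoke Remark \ref{rem:3.10}\ref{rem:3.10(1)} with $\lambda = \hgt$, asserting as obvious that $u \prec v$ implies $\hgt(u) < \hgt(v)$. That implication can fail exactly in the case you flag --- if $v$ is a $\preceq$-unit and $u$ is a $\preceq$-non-unit with $u \prec v$ (which the axioms of a premonoid do not forbid), then $\hgt(u) \ge 1 > 0 = \hgt(v)$. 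Your replacement argument --- $\prec$ is transitive, any two $\preceq$-units are mutually $\preceq$-equivalent, hence a $\preceq$-decreasing sequence contains at most one $\preceq$-unit and its tail is an infinite descending chain of $\preceq$-non-units, contradicting strong artinianity --- is a correct and economical repair; an equally short alternative would be to apply Remark \ref{rem:3.10}\ref{rem:3.10(1)} to the modified function $\lambda(x) := \hgt(x) + 1$ for $\preceq$-non-units $x$ below some fixed unit... but in fact no simple reweighting works uniformly, so your direct argument is the right call. Everything else, including the closing remark that part \ref{prop:3.13(ii)} together with Remark \ref{rem:3.7}\ref{rem:3.7(4)} independently yields $\preceq$-factorability, checks out.
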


\begin{proof}
\ref{prop:3.13(i)} As the $\preceq$-height of each element of $H$ is finite (by hypothesis), the function $\lambda \colon H \to \mathbb N \colon x \mapsto \hgt(x)$ is well defined. In particular, it is obvious from Definition \ref{def:3.12} that
$\lambda(u) = \hgt(u) < \hgt(v) = \lambda(v)$ for all $u, v \in H$ with $u \prec v$.
Therefore, we get from Remark \ref{rem:3.10}\ref{rem:3.10(1)} that $\preceq$ is an artinian preorder; and by Theorem \ref{thm:3.11}, this yields that $H$ is a $\preceq$-factorable monoid. 

We are left to check that every $\preceq$-irreducible of $H$ is also a $\preceq$-quark. Let $x \in H$ be neither a $\preceq$-unit nor a $\preceq$-quark. It is then guaranteed by our hypotheses that there exist $\preceq$-non-units $y, z \in H$ with $y \preceq x$ and $z \preceq x$ such that $x = yz$ and $\hgt(y) + \hgt(z) \le \hgt(x)$. It follows $y \prec x\,$; otherwise, $y$ is $\preceq$-equivalent to $x$ and, hence, $\hgt(x) = \hgt(y)$ and $\hgt(z) = 0$, which is impossible because the $\preceq$-units are the only elements in $H$ whose $\preceq$-height is zero. Likewise, we see that $z \prec x$. In consequence, $x$ is not a $\preceq$-irreducible.

\vskip 0.05cm
	
\ref{prop:3.13(ii)} Let $x$ be a $\preceq$-non-unit of $H$ and set $n := \hgt(x)$. If $n = 1$, then $x$ is a $\preceq$-quark and the conclusion is trivial. Hence assume $n \ge 2$, and suppose inductively that every $\preceq$-non-unit of $H$ of $\preceq$-height $h \le n-1$ factors as a product of $h$ or fewer $\preceq$-quarks. Since $x$ is neither a $\preceq$-unit nor a $\preceq$-quark, we have, similarly as in the proof of part \ref{prop:3.13(i)}, that $x = yz$ for some $\preceq$-non-units $y, z \in H$ with $\hgt(y) + \hgt(z) \le n$, which in turn yields $1 \le \hgt(y) < n$ and $1 \le \hgt(z) < n$. So, by the inductive hypothesis, there exist $k \in \llb 1, \hgt(y) \rrb$ and $\ell \in \llb 1, \hgt(z) \rrb$ such that $y = a_1 \cdots a_k$ and $z = b_1 \cdots b_\ell$ for certain $\preceq$-quarks $a_1, \,\ldots,\, a_k, \, b_1,\, \ldots, \, b_\ell \in H$. Then $x = yz = a_1 \cdots a_k b_1 \cdots b_\ell$; and this is enough to finish the proof (by induction on $n$), upon con\-sid\-er\-ing that $k+\ell \le \hgt(x)$.
\end{proof}

As a first bench test for the ideas heretofore set forth, we are going to apply Proposition \ref{prop:3.13} to a classical problem in group theory (further applications will be discussed in Sect.~\ref{sec:4}).

\begin{example}\label{exa:3.14}
Let $X$ be a finite $k$-element set and $\mathfrak S(X)$ the \evid{symmetric group} of $X$, that is, the set of all permutations of $X$ endowed with the operation of (\evid{functional}) \evid{composition} $\circ \colon \mathfrak S(X) \times \mathfrak S(X) \to \mathfrak S(X)$ that maps a pair $(f, g)$ of permutations of $X$ to the permutation $f \circ g \colon X \to X \colon x \mapsto f(g(x))$. We will denote the identity of $\mathfrak S(X)$ (viz., the identity function on $X$) by $\mathrm{id}_X$.

It is well known (see the introduction) that every $f \in \mathfrak S(X)$ factors as a composition of \evid{transpositions}, i.e., permutations of $X$ that exchange two elements and keep all others fixed. We aim to give a new proof of this result, by showing that, for $k \ge 2$, every $f \in \mathfrak S(X) \setminus \{\mathrm{id}_X\}$ is a composition of $k-1-|\mathrm{Fix}(f)|$ or fewer transpositions, where $\mathrm{Fix}(f) := \{x \in X \colon f(x) = x\}$ is the set of fixed points of $f$: The bound $k - 1 - |\mathrm{Fix}(f)|$ is sharp but not best possible (see, e.g., \cite{Ma95} and references therein); that, however, is not the point here. (For $k = 0$ or $k = 1$, $\mathfrak S(X)$ is a one-element group and there is nothing to prove.)

To begin, assume $k \ge 2$ and let $\preceq$ be the dual of the pullback of the standard order $\leq$ on $\mathbb N$ through the function $\phi \colon \mathfrak S(X) \to \mathbb N \colon f \mapsto |\mathrm{Fix}(f)|$
(see items \ref{exa:3.3(1)} and  \ref{exa:3.3(4)} of Example \ref{exa:3.3} for the terminology); to wit, we have that $f \preceq g$, for some $f, g \in \mathfrak S(X)$, if and only if $|\mathrm{Fix}(g)| \le |\mathrm{Fix}(f)|$. 
Clearly, a permutation $f$ of $X$ has $k-1$ or more fixed points if and only if $f = \mathrm{id}_X$. It follows that $(\mathfrak S(X), \preceq)$ is a strongly artinian pre\-monoid with $
\hgt(f) = k - 1 - |\mathrm{Fix}(f)| \le k-1$ for each $f \in \mathfrak S(X) \setminus \{\mathrm{id}_X\}$; 
whence the $\preceq$-quarks of $\mathfrak S(X)$ are the permutations with exactly $k-2$ fixed points, that is, the transpositions.

Now, suppose that $f \in \mathfrak S(X)$ is neither the identity $\mathrm{id}_X$ nor a transposition, so that $\phi(f) = |\mathrm{Fix}(f)| \le k-3$. Accordingly, pick an element $\bar{x} \in X$ that is not a fixed point of $f$, and let $\tau$ be the transposition that exchanges $\bar{x}$ and $f(\bar{x})$. On the one hand, $
f = \tau \circ (\tau \circ f) = (\tau \circ \tau) \circ f = \mathrm{id}_X \circ f = f$.
On the other, it is readily checked that $\mathrm{Fix}(f) \cup \{\bar{x}\} \subseteq \mathrm{Fix}(\tau \circ f)$; note, in particular, that $f(\bar{x})$ is not a fixed point of $f$, or else we would have $f(f(\bar{x})) = f(\bar{x})$, contradicting that $f$ is injective. Consequently, $\tau \circ f$ has more fixed points than $f$ and, hence, $\hgt(\tau) + \hgt(\tau \circ f) = 1 + \hgt(\tau \circ f) \le \hgt(f)$.

So, putting it all together, we can conclude from Proposition \ref{prop:3.13} that every $f \in \mathfrak S(X) \setminus \{\mathrm{id}_X\}$ factors as a composition of $k-1 - |\mathrm{Fix}(f)|$ or fewer transpositions (as wished).
\end{example}

We close the section by remarking that the artinianity of a premonoid $(H, \preceq)$, while sufficient for each $\preceq$-non-unit of $H$ to factor as a product of $\preceq$-irreducibles (Theorem \ref{thm:3.11}), is not necessary: Just let $H$ be the multiplicative monoid of the non-zero elements of the integral domain constructed by A.~Grams in \cite[Sect.~1]{Gr74} and $\preceq$ be the divisibility preorder $\mid_H$ on $H$; and consider that the $\mid_H$-irreducibles of a cancellative, commutative monoid $H$ are precisely the atoms of $H$ (Corollary \ref{cor:4.4}). A different construction will be pre\-sent\-ed in Example \ref{exa:4.8}, where, among other things, we show that it is even possible for a monoid $H$ to be reduc\-ed, finitely generated, cancellative, and $\mid_H$-atomic, and yet not satisfy the ACCP (note that, by \cite[Proposition 2.7.4.2]{GeHK06}, this can only happen if $H$ is not commutative).

\section{Applications}\label{sec:4}

Below we discuss some applications of Theorem \ref{thm:3.11}. We organize the section into three subsections:  Sect.~\ref{sec:4.1} is all about the classical theory of factorization in a ``nearly cancellative'' setting, but the approach is totally non-classical as we embrace the point of view of Sect.~\ref{sec:3}. In Sect.~\ref{sec:4.2}, we focus attention on \emph{power monoids}, a ``highly non-cancellative'' class of monoids first introduced in \cite{Fa-Tr18} and further studied in \cite{An-Tr18}. In Sect.~\ref{sec:4.3}, we obtain an ``object decomposition theorem'' (Corollary \ref{cor:4.13}) for certain categories with finite products which, among other things, leads to a monoid-theoretic and, in a way, more conceptual proof that every module $M$ of finite uniform dimension over a ring $R$ is a direct sum of finitely many indecomposable $R$-modules.

\subsection{Classical factorizations}
\label{sec:4.1}
Many fundamental aspects of the classical theory of factorization come down to the study of various phenomena that are related to the possibility or impossibility of factoring the $\preceq$-non-units of a monoid $H$ into $\preceq$-atoms or $\preceq$-irreducibles, where $\preceq$ is either the divisibility preorder $\mid_H$, or the ``divides from the left'' preorder $\vdash_H$, or the ``divides from the right'' preorder $\dashv_H$ (Example \ref{exa:3.5}). 
The following corollary will help us to substantiate our claims.

\begin{corollary}
	\label{cor:4.1}
	If $H$ is a Dedekind-finite monoid and the divisibility preorder $\mid_H$ is artinian, then every non-unit of $H$ factors as a \textup{(}non-empty, finite\textup{)} product of $\mid_H$-irreducibles.
\end{corollary}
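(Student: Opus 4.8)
The plan is to read off the statement as a direct specialization of the abstract factorization theorem. Concretely, I would apply Theorem \ref{thm:3.11} with the preorder $\preceq$ taken to be the divisibility preorder $\mid_H$ of Example \ref{exa:3.5}\ref{exa:3.5(4)}. The hypothesis that $\mid_H$ is artinian is precisely what Theorem \ref{thm:3.11} requires, so the theorem immediately yields that every $\mid_H$-non-unit of $H$ factors as a non-empty finite product of $\mid_H$-irreducibles. Nothing in this step uses Dedekind-finiteness; it is pure bookkeeping against the abstract theorem.

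The one point needing care is the gap between the notion of ``$\mid_H$-non-unit'' (in the sense of Definition \ref{def:3.6}) appearing in the conclusion of Theorem \ref{thm:3.11} and the ordinary notion of ``non-unit'' appearing in the statement of the corollary. Unwinding the definitions, $u \in H$ is a $\mid_H$-unit exactly when $u$ is $\mid_H$-equivalent to $1_H$, i.e. $u \mid_H 1_H$ and $1_H \mid_H u$; the second condition is automatic (it just says $u \in H$), so a $\mid_H$-unit is nothing but an element $u$ with $1_H \in HuH$. This is where the Dedekind-finiteness hypothesis enters: by Remark \ref{rem:3.7}\ref{rem:3.7(2)}, in a Dedekind-finite monoid the condition $1_H \in HuH$ holds if and only if $u \in H^\times$. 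Hence the $\mid_H$-units of $H$ are precisely its units, and therefore the $\mid_H$-non-units are precisely its non-units.

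Combining the two observations closes the argument: every non-unit of $H$ is a $\mid_H$-non-unit, and so, by Theorem \ref{thm:3.11} applied to $\mid_H$, it factors as a product of $\mid_H$-irreducibles, as claimed. I do not expect any genuine obstacle here—this is a corollary in the literal sense—so the only thing to get right is to cite Remark \ref{rem:3.7}\ref{rem:3.7(2)} for the identification of the two notions of unit. It is worth noting that this identification is exactly the place where Dedekind-finiteness cannot be dropped, since for an arbitrary monoid the set of $\mid_H$-units can be strictly larger than $H^\times$, in which case the translation between the two conclusions would fail.
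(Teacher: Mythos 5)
Your proposal is correct and matches the paper's own proof exactly: both apply Theorem \ref{thm:3.11} to the divisibility preorder $\mid_H$ and invoke Remark \ref{rem:3.7}\ref{rem:3.7(2)} to identify the $\mid_H$-units with the units under the Dedekind-finiteness hypothesis. Your unwinding of why $1_H \mid_H u$ is automatic is a correct (if slightly more explicit) elaboration of the same bookkeeping.
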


\begin{proof}
	It is enough to apply Theorem \ref{thm:3.11}, after recalling from Remark \ref{rem:3.7}\ref{rem:3.7(2)} that, if $H$ is Dedekind-finite, then the $\mid_H$-units of $H$ are precisely the units of $H$.
\end{proof}

As simple as it may be, Corollary \ref{cor:4.1} is a non-commutative generalization of Theorem 3.2 in Anderson and Valdes-Leon's seminal paper \cite{AnVL96} on irreducible factorizations in commutative rings: Every commutative monoid is Dedekind-finite and, by Remark \ref{rem:3.7}\ref{rem:3.7(2)}, the $\mid_H$-irreducibles of a Dedekind-finite monoid $H$ are precisely the irreducibles of $H$ as per Anderson and Valdes-Leon's work. 

Actually, we will show in the remainder of this section that Corollary \ref{cor:4.1} is also a generalization of \cite[Proposition 0.9.3]{Co06}, i.e., Cohn's classical result on atomic factorizations in cancellative monoids.

\begin{definition}\label{def:4.2}
A monoid $H$ is \evid{unit-cancellative} if $xy \ne x \ne yx$ for all $x, y \in H$ with $y \notin H^\times$; and is \evid{acyclic} if $uxv \ne x$ for all $u, v, x \in H$ with $u \notin H^\times$ or $v \notin H^\times$.
\end{definition}

Unit-cancellative monoids were recently introduced in \cite{FaGeKaTr17,Fa-Tr18}, as part of a broader program aimed to extend various aspects of the classical theory of factorization to the non-cancellative setting (every cancellative monoid is, obviously, unit-cancellative): One motivation for this is that the non-zero ideals of a commutative noetherian domain form a unit-cancellative monoid when endowed with the usual operation of ideal multiplication, but in general this monoid is not cancellative, see \cite[Sect.~3]{FaGeKaTr17} and \cite[Sect.~4]{GeRe19} for details. Another motivation comes from the monoid-theoretic approach to the study of direct sum decompositions of modules pioneered by A.~Facchini and R.~Wiegand, see \cite{Fa02, WiWi09, BaWi13, BaGe14, Ba-Sm20} and references therein (we will come back to this point in Sect.~\ref{sec:4.3}).

Acyclic monoids, on the other hand, are apparently new in the literature. For one thing, it is obvious that every acyclic monoid is unit-cancellative; all free monoids are acyclic; and a commutative monoid is acyclic if and only if it is unit-cancellative. Note, though, that a non-commutative cancellative monoid need not be acyclic, even if it is finitely generated (Example \ref{exa:4.8}). 

The next proposition and its corollary will help to clarify certain aspects of the arithmetic of unit-cancellative or acyclic monoids that are relevant to the goals of this subsection.

\begin{proposition}\label{prop:4.3}
	Let $H$ be a unit-cancellative monoid. The following hold:
\begin{enumerate}[label=\textup{(\roman{*})}]
\item\label{prop:4.3(i)} $H$ is Dedekind-finite and an element $u \in H$ is a $\vdash_H$-unit (resp., a $\dashv_H$-unit), if and only if $u$ is a $\mid_H$-unit, if and only if $u$ is a unit.
\item\label{prop:4.3(ii)} An element $a \in H$ is a  $\vdash_H$-quark (resp., a $\dashv_H$-quark), if and only if $a$ is a $\vdash_H$-atom (resp., a $\dashv_H$-atom), if and only if $a$ is a $\mid_H$-atom, if and only if $a$ is an atom.
\item\label{prop:4.3(iii)} Every $\mid_H$-atom of $H$ is a $\mid_H$-quark.
\end{enumerate}
\end{proposition}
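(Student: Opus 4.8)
The plan is to derive everything from two facts about a unit-cancellative monoid $H$ --- that its only idempotent is $1_H$, and that it is consequently Dedekind-finite --- after which Remark \ref{rem:3.7}\ref{rem:3.7(2)} handles much of the bookkeeping for free.

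For part \ref{prop:4.3(i)} I would first observe that the unique idempotent of $H$ is $1_H$: if $e = e^2$, then applying unit-cancellativity with $x = y = e$ forbids $e$ from being a non-unit, so $e \in H^\times$, and multiplying $e^2 = e$ on the left by $e^{-1}$ gives $e = 1_H$. Dedekind-finiteness then follows immediately: if $uv = 1_H$, then $(vu)(vu) = v(uv)u = vu$, so $vu$ is idempotent, hence $vu = 1_H$, and $u, v$ are mutually inverse units. Once $H$ is known to be Dedekind-finite, the chain ``$\vdash_H$-unit $\Leftrightarrow$ $\dashv_H$-unit $\Leftrightarrow$ $\mid_H$-unit $\Leftrightarrow$ unit'' is precisely the content of Remark \ref{rem:3.7}\ref{rem:3.7(2)}, since a $\vdash_H$-unit is a right-invertible element, a $\dashv_H$-unit a left-invertible one, and a $\mid_H$-unit an element $u$ with $1_H \in HuH$, all of which collapse to genuine units under Dedekind-finiteness.

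For part \ref{prop:4.3(ii)} I would dispose of the ``atom'' equivalences at once: since $H$ is Dedekind-finite, Remark \ref{rem:3.7}\ref{rem:3.7(2)} gives that $a$ is a $\vdash_H$-atom iff a $\dashv_H$-atom iff a $\mid_H$-atom iff an atom, and by part \ref{prop:4.3(i)} all the relevant flavours of ``non-unit'' coincide with plain non-units. It then remains to show $a$ is a $\vdash_H$-quark iff $a$ is a $\vdash_H$-atom, the $\dashv_H$-statement following by the left-right symmetry of passing to the opposite monoid (which is again unit-cancellative, and which swaps $\vdash_H$ with $\dashv_H$). For ``quark $\Rightarrow$ atom'', suppose $a = xy$ with $x, y$ non-units; then $a \in xH$ gives $x \vdash_H a$, and if moreover $x \in aH$, say $x = ah$, then $a = xy = a(hy)$, so unit-cancellativity forces $hy \in H^\times$, whence $y$ is left-invertible and so a unit --- contradicting that $y$ is a non-unit. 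Hence $x \prec a$ with $x$ a non-unit, contradicting the quark property, so $a$ is a $\vdash_H$-atom. For ``atom $\Rightarrow$ quark'', if a non-unit $b$ satisfied $b \prec a$, i.e. $a \in bH$ but $b \notin aH$, then writing $a = bc$ the atom property forces $c \in H^\times$ (else $a = bc$ is a product of two non-units), whence $b = ac^{-1} \in aH$, a contradiction. For part \ref{prop:4.3(iii)}, I would take a $\mid_H$-atom $a$ --- equivalently, an atom --- and suppose it is not a $\mid_H$-quark, so some non-unit $b$ has $a \in HbH$ and $b \notin HaH$; writing $a = sbt$, Dedekind-finiteness shows neither $bt$ nor $sb$ can be a unit (a unit $bt$ would make $b$ right-invertible, a unit $sb$ would make $b$ left-invertible), so reading $a = s\,(bt)$ and $a = (sb)\,t$ through the atom property forces both $s$ and $t$ to be units, giving $b = s^{-1} a t^{-1} \in HaH$, a contradiction.

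The only genuinely delicate point --- and the place where unit-cancellativity, rather than mere Dedekind-finiteness, is indispensable --- is the ``quark $\Rightarrow$ atom'' half of part \ref{prop:4.3(ii)}: ruling out $x \in aH$ rests squarely on the cancellation $a = a(hy) \Rightarrow hy \in H^\times$. Everything else is careful left-right bookkeeping in the non-commutative setting, which I would streamline by invoking opposite-monoid symmetry whenever a $\dashv_H$-claim mirrors a $\vdash_H$-one, and by leaning on Remark \ref{rem:3.7}\ref{rem:3.7(2)} to identify the various notions of ``unit'' and ``atom'' once Dedekind-finiteness is established.
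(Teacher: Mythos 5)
Your proposal is correct and follows essentially the same route as the paper: establish Dedekind-finiteness from unit-cancellativity, invoke Remark \ref{rem:3.7}\ref{rem:3.7(2)} to identify the various notions of unit and atom, and then use the cancellation $a = a(hy) \Rightarrow hy \in H^\times$ for the quark--atom equivalence and a ``both factors must be units'' argument for part \ref{prop:4.3(iii)}. The only cosmetic differences are that the paper gets Dedekind-finiteness directly from $xyx = x$ rather than via idempotents, and organizes the contradiction in \ref{prop:4.3(iii)} slightly differently.
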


\begin{proof}
\ref{prop:4.3(i)} By Remark \ref{rem:3.7}\ref{rem:3.7(2)}, it suffices to check that $H$ is Dedekind-finite, and this is straightforward: If $xy = 1_H$ for some $x, y \in H$, then $xyx=x$ and hence $yx \in H^\times$ (by the fact that $H$ is unit-cancellative). 

\vskip 0.05cm

\ref{prop:4.3(ii)} Let $a \in H$. We will only show that $a$ is a $\vdash_H$-quark if and only if $a$ is an atom, since we get from Remark \ref{rem:3.7}\ref{rem:3.7(2)} and part \ref{prop:4.3(i)} that $a$ is a $\vdash_H$-atom if and only if $a$ is a $\mid_H$-atom, if and only if $a$ is an atom: The analogous statement for $\dashv_H$ can be proved in a similar way.

Assume first that $a$ is an atom but not a $\vdash_H$-quark. Then $aH \subsetneq xH$ for some $\vdash_H$-non-unit $x \in H$. In consequence, $x \notin H^\times$ and $a = xu$ for some $u \in H$, which can only happen if $u \in H^\times$ (because $a$ is an atom and $x$ a non-unit). It follows that $a \vdash_H x$ and hence $aH \subsetneq xH \subseteq aH$ (absurd). 

Now, suppose by way of contradiction that $a$ is a $\vdash_H$-quark but not an atom. Then $a = xy$ for some non-units $x, y \in H$; and from here we get that $a \vdash_H x$, since $x$ divides $a$ from the left but, by part \ref{prop:4.3(i)}, is not a $\vdash_H$-unit. Thus $a = xy = auy$ for some $u \in H$, which is impossible for it implies that $uy \in \allowbreak H^\times$ (by the unit-cancellativity of $H$) and hence $y \in H^\times$ (again by part \ref{prop:4.3(i)}).

\vskip 0.05cm

\ref{prop:4.3(iii)} Assume to the contrary that there is a $\mid_H$-atom $a \in H$ that is not a $\mid_H$-quark. There then exists a $\mid_H$-non-unit $b \in H$ such that $b \mid_H a$ and $a \nmid_H b$. In particular, $a = ubv$ for some $u, v \in H$ such that $u$ or $v$ is not a unit. This, however, is only possible if $ub$ or $bv$ is a unit, because $a$ is a $\mid_H$-atom and, by part \ref{prop:4.3(ii)}, every $\mid_H$-atom is an atom. So, by part \ref{prop:4.3(i)}, $b$ is a unit and hence a $\mid_H$-unit (absurd).
\end{proof}

\begin{corollary}\label{cor:4.4}
Let $H$ be an acyclic monoid, and let $a \in H$. The following are equivalent:
\begin{enumerate}[label=\textup{(\alph{*})}]
	\item\label{cor:4.4(a)} $a$ is a $\vdash_H$-quark \textup{(}resp., a $\dashv_H$-quark\textup{)}.
	\item\label{cor:4.4(b)} $a$ is a $\vdash_H$-atom \textup{(}resp., a $\dashv_H$-atom\textup{)}.
	\item\label{cor:4.4(c)} $a$ is a $\mid_H$-quark.
	\item\label{cor:4.4(d)} $a$ is a $\mid_H$-atom.
	\item\label{cor:4.4(e)} $a$ is an atom.
	\item\label{cor:4.4(f)} $a$ is a $\mid_H$-irreducible.
\end{enumerate}
\end{corollary}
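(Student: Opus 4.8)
The plan is to obtain all but one of the equivalences for free from the machinery already in place, and then to prove the single implication where acyclicity is indispensable. Since every acyclic monoid is unit-cancellative, Proposition~\ref{prop:4.3} applies to $H$. Thus Proposition~\ref{prop:4.3}\ref{prop:4.3(ii)} hands us the block \ref{cor:4.4(a)} $\Leftrightarrow$ \ref{cor:4.4(b)} $\Leftrightarrow$ \ref{cor:4.4(d)} $\Leftrightarrow$ \ref{cor:4.4(e)} at one stroke (and simultaneously its ``resp.'' variant for $\dashv_H$), Proposition~\ref{prop:4.3}\ref{prop:4.3(iii)} gives \ref{cor:4.4(d)} $\Rightarrow$ \ref{cor:4.4(c)}, and Remark~\ref{rem:3.7}\ref{rem:3.7(4)} gives \ref{cor:4.4(c)} $\Rightarrow$ \ref{cor:4.4(f)} (a $\mid_H$-quark is always a $\mid_H$-irreducible). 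To close everything into a single equivalence class it therefore suffices to establish \ref{cor:4.4(f)} $\Rightarrow$ \ref{cor:4.4(d)}, for this completes the cycle \ref{cor:4.4(d)} $\Rightarrow$ \ref{cor:4.4(c)} $\Rightarrow$ \ref{cor:4.4(f)} $\Rightarrow$ \ref{cor:4.4(d)}.

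So the whole weight of the proof rests on showing that, in an acyclic monoid, \emph{every $\mid_H$-irreducible is a $\mid_H$-atom}. I would argue by contradiction: let $a$ be a $\mid_H$-irreducible and suppose $a = xy$ for some $\mid_H$-non-units $x, y \in H$. By the characterization of $\mid_H$-irreducibility recorded in Remark~\ref{rem:3.7}\ref{rem:3.7(2)}, such a factorization cannot satisfy both $a \nmid_H x$ and $a \nmid_H y$; hence, up to interchanging $x$ and $y$, I may assume $a \mid_H x$, say $x = s a t$ with $s, t \in H$. Substituting back yields $a = xy = s\,a\,(ty)$, i.e.\ an equation of the shape $a = u\,a\,v$ with $u := s$ and $v := ty$.

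This last equation is precisely the kind of ``cycle'' that acyclicity forbids, and that is the crux. By Definition~\ref{def:4.2}, $u\,a\,v = a$ is impossible unless both $u = s$ and $v = ty$ are units; and since $H$ is Dedekind-finite by Proposition~\ref{prop:4.3}\ref{prop:4.3(i)}, the membership $ty \in H^\times$ forces $y \in H^\times$ (left-invertibility of $y$ lifts to invertibility). This contradicts $y$ being a $\mid_H$-non-unit, so no factorization $a = xy$ into $\mid_H$-non-units can exist and $a$ is a $\mid_H$-atom, as required. I expect this implication to be the only genuine obstacle: every other arrow is mere bookkeeping over Proposition~\ref{prop:4.3} and Remark~\ref{rem:3.7}, and the role of acyclicity is exactly to exclude the two-sided cycles $u\,a\,v = a$ that, in a merely unit-cancellative monoid, could otherwise produce irreducibles failing to be atoms.
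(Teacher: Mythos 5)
Your proposal is correct and takes essentially the same route as the paper's proof: all arrows except \ref{cor:4.4(f)} $\Rightarrow$ \ref{cor:4.4(d)} are harvested from Proposition~\ref{prop:4.3} and Remark~\ref{rem:3.7}\ref{rem:3.7(4)}, and the remaining implication is settled exactly as you do, by turning a putative factorization $a = xy$ into an equation $a = uav$ and letting acyclicity plus Dedekind-finiteness force $x$ or $y$ to be a unit. The only cosmetic difference is that the paper treats the two cases $a \mid_H x$ and $a \mid_H y$ explicitly instead of appealing to a ``WLOG by interchanging $x$ and $y$'' (which is not a literal symmetry in a non-commutative monoid, though the mirror argument goes through just as you indicate).
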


\begin{proof}
	An acyclic monoid is also unit-cancellative, so we get from Proposition \ref{prop:4.3} that $H$ is Dedekind-finite, units and $\mid_H$-units are one and the same thing, every $\mid_H$-atom is a $\mid_H$-quark, and conditions \ref{cor:4.4(a)}, \ref{cor:4.4(b)}, \ref{cor:4.4(d)}, and \ref{cor:4.4(e)} are all equivalent. On the other hand, we have from Remark \ref{rem:3.7}\ref{rem:3.7(4)} that every $\mid_H$-quark is a $\mid_H$-irreducible. Therefore, it only remains to see that \ref{cor:4.4(f)} $\Rightarrow$ \ref{cor:4.4(d)}.
	
	To this end, let $a \in H$ be a $\mid_H$-irreducible and suppose for a contradiction that $a$ is not a $\mid_H$-atom. Since every $\mid_H$-unit is a unit, it follows that $a = xy$ for some $x, y \in H \setminus H^\times$. Thus, $x \mid_H a$ and $y \mid_H a$; and since $a$ is a $\mid_H$-irreducible, this is only possible if $a \mid_H x$ or $a \mid_H y$. To wit, $x = uav$ or $y = uav$ for some $u, v \in H$. In consequence, $a = uavy$ or $a = xuav$; and this implies, by the acyclicity of $H$, that $xu \in H^\times$ or $vy \in H^\times$. So, using that $H$ is Dedekind-finite, we get $x \in H^\times$ or $y \in H^\times$ (absurd).
\end{proof}

Based on Proposition \ref{prop:4.3}, Corollary \ref{cor:4.4}, and items \ref{rem:3.7(2)} and \ref{rem:3.7(4)} of Remark \ref{rem:3.7}, the two diagrams below
provide a succinct o\-ver\-view of the logical relations between $\preceq$-quarks, $\preceq$-atoms, and $\preceq$-irreducibles for a unit-cancellative or acyclic monoid $H$, when $\preceq$ is either the ``divides from the left'' preorder $\vdash_H$, the ``divides from the right'' preorder $\dashv_H$, or the divisibility preorder $\mid_H$ (note that the $\mid_H$-ir\-re\-duc\-i\-bles of a cancellative monoid need not be $\mid_H$-atoms or $\mid_H$-quarks, as will be confirmed by Example \ref{exa:4.8}).
{\small
\begin{figure*}[h]
\centering
\begin{minipage}{.5\textwidth}
\centering
\begin{tikzcd}[row sep=0.3cm]
	\text{$\vdash_H$-quark}  
	& 
	\text{$\vdash_H$-atom} 
	\arrow[Rightarrow, shorten <= 1pt, shorten >= 1pt]{r}
	\arrow[Leftrightarrow, shorten <= 1pt, shorten >= 1pt]{l}
	& 
	\text{$\vdash_H$-irreducible} 
	\\
	& 
	&
	\text{$\mid_H$-quark}
	\arrow[Rightarrow, shorten <= 1pt, shorten >= 1pt]{dd}
	\\
	\text{atom}  
	\arrow[Leftrightarrow, shorten <= 1pt, shorten >= 1pt]{r} 
	& \text{$\mid_H$-atom} 
	\arrow[Leftrightarrow, shorten <= 1pt, shorten >= 1pt]{uu} 
	\arrow[Leftrightarrow, shorten <= 1pt, shorten >= 1pt]{dd}
	\arrow[Rightarrow, shorten <= 1pt, shorten >= 1pt]{rd}
	\arrow[Rightarrow, shorten <= 1pt, shorten >= 1pt]{ru} 
	& 
	\\
	& 
	&
	\text{$\mid_H$-irreducible} 
	\\
	\text{$\dashv_H$-quark}  
	\arrow[Leftrightarrow, shorten <= 1pt, shorten >= 1pt]{r} 
	& 
	\text{$\dashv_H$-atom} 
	\arrow[Rightarrow, shorten <= 1pt, shorten >= 1pt]{r} 
	& 
	\text{$\dashv_H$-irreducible}
\end{tikzcd}
\captionsetup{labelformat=empty}
\caption{\small
	(a) The unit-cancellative case.}
\label{fig:right-diagram}
\end{minipage}%
\hfil
\begin{minipage}{.5\textwidth}
\centering
\begin{tikzcd}[row sep=0.3cm]
\text{$\vdash_H$-quark}  
& 
\text{$\vdash_H$-atom} 
\arrow[Rightarrow, shorten <= 1pt, shorten >= 1pt]{r}
\arrow[Leftrightarrow, shorten <= 1pt, shorten >= 1pt]{l}
& 
\text{$\vdash_H$-irreducible} 
\\
& 
&
\text{$\mid_H$-quark}
\\
\text{atom}  
\arrow[Leftrightarrow, shorten <= 1pt, shorten >= 1pt]{r} 
& \text{$\mid_H$-atom} 
\arrow[Leftrightarrow, shorten <= 1pt, shorten >= 1pt]{uu} 
\arrow[Leftrightarrow, shorten <= 1pt, shorten >= 1pt]{dd}
\arrow[Leftrightarrow, shorten <= 1pt, shorten >= 1pt]{rd}
\arrow[Leftrightarrow, shorten <= 1pt, shorten >= 1pt]{ru} 
& 
\\
& 
&
\text{$\mid_H$-irreducible} 
\\
\text{$\dashv_H$-quark}  
\arrow[Leftrightarrow, shorten <= 1pt, shorten >= 1pt]{r} 
& 
\text{$\dashv_H$-atom} 
\arrow[Rightarrow, shorten <= 1pt, shorten >= 1pt]{r} 
& 
\text{$\dashv_H$-irreducible}
\end{tikzcd}
\captionsetup{labelformat=empty}
\caption{\small 
	(b) The acyclic case.}
\end{minipage}
\end{figure*}
}

With this done, we are ready for the next theorem and its corollary: The latter subsumes \cite[The\-o\-rem 2.28(i)]{Fa-Tr18}, i.e., Fan and the author's generalization (to unit-cancellative monoids) of Cohn's \cite[Proposition 0.9.3]{Co06} (see also the comment after Corollary \ref{cor:4.1}).

\begin{theorem}\label{thm:4.5}
The following conditions are equivalent for a monoid $H$:
\begin{enumerate}[label=\textup{(\alph{*})}]
\item\label{thm:4.5(a)} $H$ is unit-cancellative and the preorders $\vdash_H$ and $\dashv_H$ are both artinian.
\item\label{thm:4.5(b)} $H$ is acyclic and the divisibility preorder $\mid_H$ is artinian.
\end{enumerate}
Moreover, each of these conditions implies that every non-unit of $H$ factors as a product of atoms.
\end{theorem}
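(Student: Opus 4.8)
The plan is to prove the equivalence of \ref{thm:4.5(a)} and \ref{thm:4.5(b)} and then extract the concluding assertion from condition \ref{thm:4.5(b)}. Throughout I would use that a unit-cancellative (in particular, an acyclic) monoid is Dedekind-finite, by Proposition \ref{prop:4.3}\ref{prop:4.3(i)}, so that $H^\times$ consists exactly of the left- or right-invertible elements. The easy direction is \ref{thm:4.5(b)} $\Rightarrow$ \ref{thm:4.5(a)}: every acyclic monoid is unit-cancellative, so only the artinianity of $\vdash_H$ and $\dashv_H$ is at issue. Here the key observation is that, in an acyclic monoid, every $\vdash_H$-decreasing sequence is automatically $\mid_H$-decreasing. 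Indeed, if $x_{k+1} \vdash_H x_k$, say $x_k = x_{k+1} h$, but $x_{k+1} \notin x_k H$, then a hypothetical relation $x_{k+1} \in H x_k H$ would give $x_{k+1} = p\,x_{k+1}\,(hq)$, whence acyclicity and Dedekind-finiteness force $h$ to be a unit and hence $x_{k+1} \in x_k H$, a contradiction; so $x_k \nmid_H x_{k+1}$. Thus an artinian $\mid_H$ (equivalently, the ACCP, by Remark \ref{rem:3.10}\ref{rem:3.10(3)}) rules out infinite $\vdash_H$-decreasing and, symmetrically, $\dashv_H$-decreasing sequences.

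The substantive direction is \ref{thm:4.5(a)} $\Rightarrow$ \ref{thm:4.5(b)}, which I would split into acyclicity and the artinianity of $\mid_H$. For acyclicity, I take a relation $uxv = x$ and iterate it to the identities $u^n x = u^{n+1} x v$ and $x v^n = u\, x v^{n+1}$, valid for all $n$. The first yields an ascending chain $xH \subseteq uxH \subseteq u^2 x H \subseteq \cdots$ of principal right ideals, and the second an ascending chain $Hx \subseteq Hxv \subseteq H x v^2 \subseteq \cdots$ of principal left ideals. Artinianity of $\vdash_H$ and $\dashv_H$ (the ACCPR and the ACCPL) forces both chains to stabilize, and feeding the resulting equalities back into the iterated relations together with unit-cancellativity produces elements $s, w$ with $sv, uw \in H^\times$; Dedekind-finiteness then gives $u, v \in H^\times$, which is precisely acyclicity.

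The main obstacle is to derive the artinianity of $\mid_H$ from the one-sided chain conditions, since $\mid_H$ is only the transitive closure of $\vdash_H \cup \dashv_H$ (Example \ref{exa:3.5}\ref{exa:3.5(5)}), and transitive closures of artinian relations need not be artinian. I would argue by contradiction from a $\mid_H$-decreasing sequence $(x_k)$, writing $x_k = a_k x_{k+1} b_k$ and setting $c_k := a_0 \cdots a_{k-1}$ and $d_k := b_{k-1} \cdots b_0$ (with $c_0 = d_0 = 1_H$). The telescoping identities $c_k x_k = c_{k+1} x_{k+1} b_k$ and $x_k d_k = a_k x_{k+1} d_{k+1}$ then exhibit an ascending chain $(c_k x_k H)_k$ of principal right ideals and an ascending chain $(H x_k d_k)_k$ of principal left ideals. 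Invoking the ACCPR and the ACCPL \emph{simultaneously}, both chains are constant from some index $k$ onward; unit-cancellativity applied to the two stabilization identities (via $c_{k+1} x_{k+1} = c_{k+1} x_{k+1}(b_k t)$ and its left-handed analogue) forces $b_k$ and $a_k$ to be units, so that $x_k = a_k x_{k+1} b_k$ gives $x_{k+1} \in H x_k H$, i.e. $x_k \mid_H x_{k+1}$ --- contradicting that $(x_k)$ is $\mid_H$-decreasing. This double-chain bookkeeping is the crux of the whole argument, and it is where both one-sided conditions are genuinely needed.

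Finally, the closing assertion follows from condition \ref{thm:4.5(b)}: since an acyclic monoid is Dedekind-finite and $\mid_H$ is artinian, Corollary \ref{cor:4.1} shows that every non-unit of $H$ factors as a product of $\mid_H$-irreducibles, and by the equivalences \ref{cor:4.4(d)} $\Leftrightarrow$ \ref{cor:4.4(e)} $\Leftrightarrow$ \ref{cor:4.4(f)} of Corollary \ref{cor:4.4} these $\mid_H$-irreducibles are precisely the atoms of $H$. Hence every non-unit of $H$ factors as a product of atoms, as claimed.
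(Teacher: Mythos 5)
Your proof is correct, and its core coincides with the paper's. The decisive step --- deriving the artinianity of $\mid_H$ from the two one-sided conditions by forming the telescoping products $c_k x_k$ and $x_k d_k$, stabilizing the resulting ascending chains of principal right and left ideals, and then using unit-cancellativity plus Dedekind-finiteness to force the cofactors $a_k, b_k$ into $H^\times$ --- is exactly the paper's Part~2 of \ref{thm:4.5(a)} $\Rightarrow$ \ref{thm:4.5(b)} (there the products are written $q_k := u_1 \cdots u_k$ and the left-handed chain is dispatched ``mutatis mutandis''). The implication \ref{thm:4.5(b)} $\Rightarrow$ \ref{thm:4.5(a)} and the ``Moreover'' clause (via Corollaries \ref{cor:4.1} and \ref{cor:4.4}) also match the paper. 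The one genuine divergence is the acyclicity step: you iterate $x = uxv$ to $x = u^n x v^n$ and read off the explicit chains $xH \subseteq uxH \subseteq u^2xH \subseteq \cdots$ and $Hx \subseteq Hxv \subseteq \cdots$, whose stabilization yields $sv, uw \in H^\times$ and hence $u, v \in H^\times$; the paper instead takes a $\dashv_H$-minimal element of the set $\Lambda$ of ``cyclic'' elements (via Remark \ref{rem:3.10}\ref{rem:3.10(2)}) and reaches a contradiction there. Both are sound; your variant is arguably more direct in that it avoids the minimal-element selection and simultaneously certifies that \emph{both} $u$ and $v$ are units rather than appealing to left-right symmetry, at the cost of invoking both one-sided chain conditions in that step where the paper needs only one.
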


\begin{proof}
The ``Moreover'' part of the theorem is an obvious consequence of Corollaries \ref{cor:4.1} and \ref{cor:4.4} and the equivalence between Conditions \ref{thm:4.5(a)} and \ref{thm:4.5(b)}, so we focus on proving that \ref{thm:4.5(a)} $\Rightarrow$ \ref{thm:4.5(b)} $\Rightarrow$ \ref{thm:4.5(a)}.

\vskip 0.05cm

\ref{thm:4.5(a)} $\Rightarrow$ \ref{thm:4.5(b)}: We divide the proof into two parts. First, we show that $H$ is an acyclic monoid (\textsc{Part 1}). Next, we verify that $\mid_H$ is an artinian preorder (\textsc{Part 2}).

\vskip 0.05cm

\textsc{Part 1:} \emph{$H$ is acyclic}. Assume by way of contradiction that $H$ is not acyclic. Since the statement to be proved is ``left-right symmetric'', it follows (without loss of generality) that the set 
\[
\Lambda := \{x \in H \colon x = uxv \text{ for some } u, v \in H \text{ with } u \notin H^\times\}
\]
is non-empty; and since $\dashv_H$ is artinian, we get from Remark \ref{rem:3.10}\ref{rem:3.10(2)} that $\Lambda$ has at least one $\dashv_H$-minimal element $\bar{x}$. In particular, $\bar{x}$ is an element of $\Lambda$ and, hence, $\bar{x} = u\bar{x} v$ for some $u, v \in H$ with $u \notin H^\times$. Set $y := \bar{x} v$. Since $uyv =  (u \bar{x} v) \, v = \bar{x} v = y$ and $u \notin H^\times$, we see that $y$ is in $\Lambda$. On the other hand, we have that $\bar{x} = u \bar{x} v = uy$ and hence $y \dashv_H \bar{x}$. Recalling that $\bar{x}$ is a $\dashv_H$-minimal element of $\Lambda$, it follows that $\bar{x}$ is $\dashv_H$-equivalent to $y$, that is, $H\bar{x} = Hy$. This, however, means that $\bar{x} v = y = w \bar{x}$ for some $w \in H$, with the result that $\bar{x} = u \bar{x} v = uw \bar{x}$. So, using that $H$ is unit-cancellative and hence Dedekind-finite (by item \ref{prop:4.3(i)} of Proposition \ref{prop:4.3}), we conclude that $u$ is a unit (absurd). 

\vskip 0.05cm

\textsc{Part 2:} \emph{$\mid_H$ is artinian}. Let $(x_k)_{k \ge 0}$ be a $\mid_H$-non-increasing sequence. We need to find that $x_k \mid_H x_{k+1}$ for all large $k$, and we will actually show the stronger statement that $x_k \in H^\times x_{k+1} H^\times$ from some $k$ on.

To start with, we are given that, for each $k \in \mathbb N^+$, there exist $u_k, v_k \in H$ such that $x_{k-1} = u_k x_k v_k$. We claim that all but finitely many terms of the sequence $v_1, v_2, \ldots$ are units; mutatis mutandis, the same argument also applies to the sequence $u_1, u_2, \ldots,$ and this will be enough to conclude the proof.

Fix $k \in \mathbb N^+$ and set $q_k := u_1 \cdots u_k$. Since $u_k x_k v_k = x_{k-1}$, it is evident that $q_k x_k v_k = q_{k-1} u_k x_k v_k = q_{k-1} x_{k-1}$ and hence $q_k x_k \vdash_H q_{k-1} x_{k-1}$; i.e., the sequence $q_1 x_1, q_2 x_2, \ldots$ is $\vdash_H$-non-increasing. Since $\vdash_H$ is artinian (by hypothesis), it follows that there exists $k' \in \mathbb N^+$ such that, for $k \ge k'$, $q_{k-1} x_{k-1} \vdash_H q_k x_k$ and, hence, $q_k x_k = q_{k-1} x_{k-1} r_{k-1}$ for some $r_{k-1} \in H$. In consequence, we get from the above that, for $k \ge k'$, $
q_{k-1} x_{k-1} = q_k x_k v_k = q_{k-1} x_{k-1} r_{k-1} v_k$. So, recalling that $H$ is unit-cancellative (and Dedekind-finite), we see that, for all large $k \in \mathbb N^+$, $r_{k-1} v_k$ is a unit and hence the same is true of $v_k$ (as wished).

\vskip 0.05cm

\ref{thm:4.5(b)} $\Rightarrow$ \ref{thm:4.5(a)}: Since every acyclic monoid is unit-cancellative (see the comment after Definition \ref{def:4.2}), it is sufficient to show that the preorders $\vdash_H$ and $\dashv_H$ are both artinian: We will work out the details for $\vdash_H$ only, as the other case is essentially the same.

Let $(x_k)_{k \ge 0}$ be a $\vdash_H$-non-increasing sequence, so that, for every $k \in \mathbb N$, there is an element $y_k \in H$ such that $x_k = x_{k+1} y_k$. We need to show that $x_k \vdash_H x_{k+1}$ for all but finitely many $k$, and we will actually prove the stronger statement that $y_k$ is a unit for all large $k$.

Indeed, it is clear from the standing assumptions that $(x_k)_{k \ge 1}$ is a $\mid_H$-non-increasing sequence.
Since $\mid_H$ is artinian, it follows that there exists $k' \in \mathbb N$ such that, for every $k \ge k'$, $x_{k+1} = u_k x_k v_k = u_k x_{k+1} y_k v_k$ for some $u_k, v_k \in H$. By the acyclicity of $H$, this yields that, for all large $k$, $y_k v_k$ is a unit and hence $y_k$ is a unit (recall once again that acyclic monoids are unit-cancellative and hence Dedekind-finite).
\end{proof}

\begin{corollary}\label{cor:4.6}
The following conditions are equivalent for a monoid $H$:

\begin{enumerate}[label=\textup{(\alph{*})}]
\item\label{cor:4.6(a)} $H$ is unit-cancellative and satisfies the \textup{ACCPR} and the \textup{ACCPL}.
\item\label{cor:4.6(b)} $H$ is acyclic and satisfies the \textup{ACCP}.
\end{enumerate}
Moreover, each of these conditions implies that every non-unit of $H$ factors as a product of atoms.
\end{corollary}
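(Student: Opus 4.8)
The plan is to reduce this corollary directly to Theorem \ref{thm:4.5} by rewriting the ideal-theoretic chain conditions in the language of artinian preorders. The bridge is Remark \ref{rem:3.10}\ref{rem:3.10(3)}, which records the three equivalences we need: $H$ satisfies the ACCP if and only if the divisibility preorder $\mid_H$ is artinian; $H$ satisfies the ACCPR if and only if the ``divides from the left'' preorder $\vdash_H$ is artinian; and $H$ satisfies the ACCPL if and only if the ``divides from the right'' preorder $\dashv_H$ is artinian.

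First I would substitute these equivalences into the two conditions of the present statement. Condition \ref{cor:4.6(a)} reads ``$H$ is unit-cancellative and satisfies the ACCPR and the ACCPL''; applying Remark \ref{rem:3.10}\ref{rem:3.10(3)} turns this into ``$H$ is unit-cancellative and the preorders $\vdash_H$ and $\dashv_H$ are both artinian'', which is verbatim Condition \ref{thm:4.5(a)} of Theorem \ref{thm:4.5}. Likewise, Condition \ref{cor:4.6(b)} reads ``$H$ is acyclic and satisfies the ACCP'', which the same remark converts into ``$H$ is acyclic and the divisibility preorder $\mid_H$ is artinian'', i.e.\ verbatim Condition \ref{thm:4.5(b)}. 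Once these reformulations are in place, the equivalence of \ref{cor:4.6(a)} and \ref{cor:4.6(b)} is nothing other than the equivalence of \ref{thm:4.5(a)} and \ref{thm:4.5(b)} already established in Theorem \ref{thm:4.5}, and the ``Moreover'' clause here — that each condition forces every non-unit of $H$ to factor as a product of atoms — is precisely the ``Moreover'' clause of that theorem. Hence the corollary follows at once.

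I do not expect any genuine obstacle, since all the substance is carried by Theorem \ref{thm:4.5}; the corollary is essentially a dictionary entry translating ``ACC on principal ideals'' into ``artinianity of the corresponding divisibility preorder''. The only point demanding care is a bookkeeping check that the left/right conventions are matched correctly — that ascending chains of principal \emph{right} ideals $aH$ correspond to artinianity of $\vdash_H$ (and not of $\dashv_H$), and symmetrically for left ideals — but this pairing is exactly what Remark \ref{rem:3.10}\ref{rem:3.10(3)} asserts, so no independent verification is required beyond quoting it.
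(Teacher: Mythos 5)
Your proposal is correct and coincides with the paper's own proof, which states verbatim that Corollary \ref{cor:4.6} is ``simply a reformulation of Theorem \ref{thm:4.5} based on Remark \ref{rem:3.10}\ref{rem:3.10(3)}.'' The left/right bookkeeping you flag is indeed exactly what that remark records, so nothing further is needed.
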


\begin{proof}
This is simply a reformulation of Theorem \ref{thm:4.5} based on Remark \ref{rem:3.10}\ref{rem:3.10(3)}.
\end{proof}

Corollary \ref{cor:4.6} is, in a way, best possible, as suggested by the next two examples (see Sect.~\ref{sect:5} for a couple of open questions that could further clarify the picture): The first shows that an acyclic, cancellative monoid satisfying the ACCPL need not satisfy the ACCP; the second shows, among other things, that a reduced, finitely generated, cancellative, non-commutative monoid $H$ can be $\mid_H$-atomic or $\mid_H$-factorable without satisfying the ACCP (cf.~the comments at the end of Sect.~\ref{sec:3}).

\begin{example}\label{exa:4.7}
In \cite[Example 2.6]{Ma-Zi-09}, R.~Mazurek and M.~Ziembowski construct a linearly ordered monoid $(H, \preceq)$ that satisfies the ACCPL but not the ACCPR (``linearly or\-dered'' means that $\preceq$ is a total order such that $uxv \prec uyv$ for all $u, v, x, y \in H$ with $x \prec y$, as seen by unravelling Definition \ref{def:3.4}); moreover, $1_H \preceq x$ for every $x \in H$. It follows that $H$ is cancellative and acyclic (in particular, we have $x \prec uxv$ for all $u, v, x \in H$ with $u \ne 1_H$ or $v \ne 1_H$). Consequently, we get from Corollary \ref{cor:4.6} that $H$ does not satisfy the ACCP, or else it would also satisfy the ACCPR (a contradiction). 
\end{example}

\begin{example}\label{exa:4.8}
Fix $n \in \mathbb N^+$, and let $H$ be the presentation $\mathrm{Mon}\langle A \mid R \rangle$, where $A$ is the $2$-element set $\{a, b\}$ and $R$ is the relation $\{(b^{\ast n}, a * b^{\ast n} * a)\}$ on the monoid $\mathscr F(A)$ (see Sect.~\ref{sec:2.3} for terminology and notation). By Theorem \ref{th:adian-theorem}, $H$ is a cancellative monoid, for it is defined by a finite pre\-sen\-tation whose left and right graphs are cycle-free (each is a path graph on two vertices). 
In addition, it is clear that, if two $A$-words $\mathfrak u$ and $\mathfrak v$ are congruent modulo $R^\sharp$, then $
\mathsf{v}_b^A(\mathfrak u) = \mathsf{v}_b^A(\mathfrak v)$; whence the only unit of $H$ is the identity $1_H$, i.e., the congruence class of $\varepsilon_A$ (the empty $A$-word) mod\-ulo $R^\sharp$. 

Thus, we obtain from Proposition \ref{prop:4.3}\ref{prop:4.3(i)} that $1_H$ is also the only $\mid_H$-unit of $H$.
It follows, by Remark \ref{rem:3.7}\ref{rem:3.7(2)} and the same ``$b$-adic argument'' used above, that $a$ is a $\mid_H$-atom; and so is $b$ for $n \ge 2$. On the other hand, every $\mid_H$-atom is a $\mid_H$-irreducible (Remark \ref{rem:3.7}\ref{rem:3.7(4)}); and we aim to show that
\begin{itemize}
\item if $n = 1$, then $b$ is a $\mid_H$-irreducible (\textsc{Part 1}) but neither a $\mid_H$-atom nor a $\mid_H$-quark (\textsc{Part 2});
\item none of the preorders $\vdash_H$, $\dashv_H$, and $\mid_H$ is artinian (\textsc{Part 3}).
\end{itemize}
Overall, this will mean that $H$ is $\mid_H$-factorable for all values of $n$, and is $\mid_H$-atomic if and only if $n \ge 2$. (Note that these conclusions cannot be drawn from Theorem \ref{thm:4.5}: The reason is that $H$ is not acyclic, because $b^{\ast n} \equiv a \ast b^{\ast n} \ast a \bmod R^\sharp$ and $a \notin H^\times$.)
\vskip 0.05cm

\textsc{Part 1:} \emph{$b$ is $\mid_H$-irreducible}. If not, then we see from Remark \ref{rem:3.7}\ref{rem:3.7(2)} that $b \equiv \mathfrak u \ast \mathfrak v \bmod R^\sharp$ for some $\mathfrak u, \mathfrak v \in \mathscr F(A)$ such that $b \nmid_H \mathfrak u$ and $b \nmid_H \mathfrak v$ (recall that the only $\mid_H$-unit of $H$ is $1_H$). So, $\mathfrak u$ and $\mathfrak v$ are powers of $a$ in $\mathscr F(A)$; whence $b \equiv a^{\ast k} \bmod R^\sharp$ for some $k \in \mathbb N$. But this is a contradiction, as we know from the above that two $A$-words can only be congruent modulo $R^\sharp$ if they contain an equal number of $b$'s.

\vskip 0.05cm

\textsc{Part 2:} \emph{If $n = 1$, then $b$ is neither a $\mid_H$-atom nor a $\mid_H$-quark}. If $n = 1$, then $b \equiv a \ast b \ast a \allowbreak \bmod R^\sharp$ (by the definition of $H$), but neither $a$ nor $b \ast a$ is a $\mid_H$-unit; therefore, $b$ is a $\mid_H$-non-unit and factors in $H$ as a product of two $\mid_H$-non-units; viz., $b$ is not a $\mid_H$-atom. Moreover, it is clear that 
$b \nmid_H a$, or else $0 = \mathsf{v}_b^A(a) = \mathsf{v}_b^A(b) = 1$ (absurd); so, $b$ is not a $\mid_H$-quark, because $a \mid_H b$.

\vskip 0.05cm

\textsc{Part 3:} \emph{None of $\vdash_H$, $\dashv_H$, and $\mid_H$ is artinian}. Let $(\mathfrak z_k)_{k \ge 0}$ be the $H$-valued sequence whose $k^\mathrm{th}$ term $\mathfrak z_k$ is the $A$-word $b^{\ast n} \ast a^{\ast k} \ast b^{\ast n}$ (taken modulo $R^\sharp$). For every $k \in \mathbb N$, we have that
\[
\mathfrak z_{k+1} \ast a \equiv b^{\ast n} \ast a^{\ast k} \ast a \ast b^{\ast n} \ast a \equiv \allowbreak b^{\ast n} \ast a^{\ast k} \ast \allowbreak b^{\ast n} \equiv \mathfrak z_k \bmod R^\sharp;
\]
and in a similar way, $a \ast \mathfrak z_{k+1} \equiv \mathfrak z_k \bmod R^\sharp$. To wit, the sequence $(\mathfrak z_k)_{k \ge 0}$ is $\vdash_H$- and $\dashv_H$-non-increasing; therefore, it is also $\mid_H$-non-increasing (by the fact that $\vdash_H$ is a subrelation of $\mid_H$).

Suppose for a contradiction that there exists $k \in \mathbb N$ such that $\mathfrak z_i \vdash_H \mathfrak z_{k+1}$, $\mathfrak z_i \dashv_H \mathfrak z_{k+1}$, or $\mathfrak z_i \mid_H \mathfrak z_{k+1}$ for some $i \in \llbracket 0, k \rrbracket$. In either case, we can find $\mathfrak u, \mathfrak v \in \mathscr F(A)$ such that
$\mathfrak u \ast \mathfrak z_i \ast \mathfrak v \equiv \allowbreak \mathfrak z_{k+1} \bmod R^\sharp$; and we may assume that $i$ is the smallest integer between $0$ and $k$ (inclusive) for which this holds.

Now, since $\mathsf{v}_b^A(\mathfrak z_i) = \allowbreak \mathsf{v}_b^A(\mathfrak z_{k+1}) = \allowbreak 2n$, $\mathfrak u$ and $\mathfrak v$ are necessarily powers of $a$ in $\mathscr F(A)$, that is, $\mathfrak u = a^{\ast r}$ and $\mathfrak v = a^{\ast s}$ for some $r, s \in \mathbb N$ (recall that if two $A$-words are congruent modulo $R^\sharp$ then they have the same $b$-adic valuation):
We claim that neither $r$ nor $s$ can be zero. 
For, assume to the contrary that $r = 0$ (the other case is symmetric). It then follows from the above that 
\[
b^{\ast n} \ast a^{\ast i} \ast b^{\ast n} \ast a^{\ast s} \equiv b^{\ast n} \ast \allowbreak a^{\ast (k+1)} \ast b^{\ast n} \bmod R^\sharp;
\] 
and by the cancellativity of $H$, we get 
\begin{equation}\label{equ:1}
b^{\ast n} \ast a^{\ast s} \equiv a^{\ast (k+1 - i)} \ast b^{\ast n} \bmod R^\sharp.
\end{equation}
But it is readily checked (by induction) that 
\begin{equation}\label{equ:2}
a^{\ast j} \ast b^{\ast n} \ast a^{\ast j} \equiv b^{\ast n} \bmod R^\sharp,
\qquad \text{for every }
j \in \mathbb N.
\end{equation}
So, multiplying both sides of the congruence in Eq.~\eqref{equ:1} by $a^{\ast s}$, we obtain that
\[
a^{\ast (s+k+1 - i)} \ast b^{\ast n} 
\equiv a^{\ast s} \ast b^{\ast n} \ast a^{\ast s} \stackrel{\eqref{equ:2}}{\equiv} b^{\ast n} \bmod R^\sharp; 
\]
whence $a^{\ast (s+k+1-i)} \equiv \varepsilon_A \bmod R^\sharp$ (by the fact that $H$ is cancellative). This however is impossible, since it gives $i = s + k + 1 \ge k+1$. In consequence, $r$ and $s$ must be non-zero (as wished).

To sum it up, we have established that $a^{\ast r} \ast \mathfrak z_i \ast a^{\ast s} \equiv \mathfrak z_{k+1} \bmod R^\sharp$ for some $r, s \in \mathbb N^+$; and we shall see from here that $i = 0$. In fact, assume that $i$ is a positive integer. Then
\[
\mathfrak z_{k+1} \equiv a^{\ast r} \ast b^{\ast n} \ast a^{\ast i} \ast b^{\ast n} \ast a^{\ast s} \equiv a^{\ast (r-1)} \ast b^{\ast n} \ast a^{\ast (i-1)} \ast b^{\ast n} \ast a^{\ast s} \equiv a^{\ast (r-1)} \ast \mathfrak z_{i-1} \ast a^{\ast s} \bmod R^\sharp;
\]
and similarly, $\mathfrak z_{k+1} \equiv a^{\ast r} \ast \mathfrak z_{i-1} \ast a^{\ast (s-1)} \bmod R^\sharp$. Thus $\mathfrak z_{i-1} \vdash_H \mathfrak z_{k+1}$ and $\mathfrak z_{i-1} \dashv_H \mathfrak z_{k+1}$, contradicting the minimality of $i$. It follows that $i = 0$ (as wished) and hence 
\[
a^{\ast r} \ast {\underbrace{b^{\ast n} \ast a^{\ast 0} \ast b^{\ast n}}_{= \, \mathfrak z_0}} \ast a^{\ast s}
\equiv 
{\underbrace{b^{\ast n} \ast a^{\ast (k+1)} \ast b^{\ast n}}_{= \, \mathfrak z_{k+1}}}
\stackrel{\eqref{equ:2}}{\equiv}  
a^{\ast r} \ast b^{\ast n} \ast a^{\ast (r+s+k+1)} \ast b^{\ast n} \ast a^{\ast s} \bmod R^\sharp;
\]
which, by cancellativity, implies $a^{\ast (r+s+k+1)} \equiv \varepsilon_A \bmod R^\sharp$. But this can only happen if $r+s+k+1 = 0$ (absurd), because there is no non-empty $A$-word congruent to $\varepsilon_A$ modulo $R^\sharp$ (recall that $H$ is reduced).

So, putting it all together, we conclude that none of the preorders $\vdash_H$, $\dashv_H$, and $\mid_H$ is artinian, since the sequence $(\mathfrak z_k)_{k \ge 0}$ is (strictly) decreasing with respect to each of them.
\end{example}

We finish the subsection with a few remarks on Dedekind-finiteness, motivated by the critical role this condition plays in Corollary \ref{cor:4.1} and Theorem \ref{thm:4.5} (see also Proposition \ref{prop:4.11} and Theorem \ref{thm:4.12}).
\begin{remarks}\label{rem:4.9}
	\begin{enumerate*}[resume,label=\textup{(\arabic{*})}]
		\item\label{rem:4.9(2)} Let $H$ be a monoid. We will prove that, if the ``divides from the left'' preorder $\vdash_H$ or the ``divides from the right'' preorder $\dashv_H$ is noetherian, then $H$ is Dedekind-finite. 
		\\
		
		\indent{}In fact, pick $x, y \in H$ such that $xy = 1_H$ and assume $\vdash_H$ is noetherian (the other case is symmetric); it suffices to prove that $y$ is a unit. Since $1_H \vdash_H y \allowbreak  \vdash_H y^2 \vdash_H \cdots$ and, by hypothesis, no in\-fi\-nite se\-quence of elements of $H$ can be (strictly) $\vdash_H$-in\-creas\-ing, we have that $y^{k+1} \vdash_H y^k$ for some $k \in \mathbb N$. It follows that there exists $u \in H$ such that $x^k y^{k+1} u = x^k y^k$; and this, in turn, gives that $xy = 1_H = yu$, for it is immediate (by induction on $k$) that $x^k y^k = 1_H$. So, $y$ is a unit and we are done.
	\end{enumerate*}
	
	\vskip 0.05cm
	
	\begin{enumerate*}[resume,label=\textup{(\arabic{*})}]
		\item\label{rem:4.9(5)} As a complement to the conclusions made in item \ref{rem:4.9(2)}, we will show that neither the artinianity nor the noetherianity of the divisibility preorder $\mid_H$ is a sufficient condition for a monoid $H$ to be Dedekind-finite; nor is the artinianity of $\vdash_H$ or $\dashv_H$.\\
		
		\indent{}Indeed, let $M$ be a monoid which is not Dedekind-finite, in such a way that
		we can pick $x, y \in M$ with $xy = 1_M \ne yx$. Accordingly, let $H$ be the submonoid of $M$ generated by the set $\{x,y\}$. Of course, $H$ is not Dedekind-finite. However, $H = HzH$ for all $z \in H$, and hence $\mid_H$ is artinian and noetherian.\\
		
		\indent{}In fact, fix $z \in H$. Then $z \in \{x, y\}^n$ for some $n \in \mathbb N^+$; and since $x^k y^k = 1_H$ for all $k \in \mathbb N$ (by induction on $k$), it is readily found (by induction on $n$) that there exist $a, b \in \mathbb N$ such that $z = y^a x^b$. It follows that $x^a z y^b = 1_H$ and hence $H \supseteq HzH \supseteq H x^a z y^b H = H$. To wit, $H = HzH$ (as wished).
	\end{enumerate*}

		\indent{}It remains to see that $\vdash_H$ is artinian (the other case is symmetric). To start with, it is easily checked that, if $y^p x^q u = y^r x^s$ for some $p, q, r, s \in \mathbb N$ and $u \in H$, then $p \le r$: Otherwise, we would have
		\[
		1_H = x^r y^r x^s y^s = x^r y^p x^q u\, y^s = y^{p-r} x^q u\, y^s = yv, 
		\]
		and hence $y \in H^\times$ (absurd), where $v := y^{p-r-1} x^q u\, y^s \in H$ (recall that $x^k y^k = 1_H$ for all $k \in \mathbb N$). It follows that, if $(z_k)_{k \ge 0}$ is a $\vdash_H$-non-increasing sequence of elements of $H$, then there exist $\alpha, b_0, b_1, \ldots \in \mathbb N$ such that $z_k = y^\alpha x^{b_k}$ for every large $k \in \bf N$ (we proved above that each $z \in H$ has the form $y^a x^b$ for some $a, b \in \mathbb N$). So $H$ is $\vdash_H$-artinian, because $y^q x^r y^r x^s = y^q x^s$ (i.e., $y^q x^r \vdash_H y^q x^s$) for all $q, r, s \in \bf N$.
	
	\vskip 0.05cm
	
	\begin{enumerate*}[resume,label=\textup{(\arabic{*})}]
		\item\label{rem:4.9(3)} 
		Let $H$ be a \evid{periodic} monoid, meaning that, for each $z \in H$, the subsemigroup of $H$ generated by $\{z\}$ is finite. We claim that $H$ is Dedekind-finite. Indeed, suppose $xy = 1_H$ for some $x, y \in H$; it suffices to show that $y$ is a unit. To this end, note that, since the set $\{y, y^2, \ldots\}$ is finite, we are guaranteed (by the Pigeonhole Principle) that $y^m = y^{m+n}$ for some $m, n \in \mathbb N^+$. Thus, we find that $1_H = x^m y^m = x^m y^{m+n} \allowbreak = y^n$, because $x^k y^k = 1_H$ for all $k \in \mathbb N$ (cf.~item \ref{rem:4.9(2)}). Therefore, $y$ is a unit (as wished).
	\end{enumerate*}
\end{remarks}

\subsection{Power monoids} 
\label{sec:4.2}
Let $H$ be a monoid. Following \cite{An-Tr18}, we let the \evid{reduced power monoid} of $H$, hereafter denoted by $\mathcal P_{\mathrm{fin},1}(H)$, be the monoid obtained by endowing the set of all finite subsets of $H$ containing the identity $1_H$ with the operation of setwise multiplication induced by $H$, so that $
XY = \{xy \colon x \in X,\, y \in Y\}$ for all
$X, Y \in \mathcal P_{\mathrm{fin},1}(H)$. Note that the identity of $\mathcal P_{\mathrm{fin},1}(H)$ is the singleton $\{1_H\}$.

The arithmetic of $\mathcal P_{\mathrm{fin},1}(H)$ is rich and, in a way, rather intricate, even in the fundamental case where $H$ is a cyclic group: Part of the reason lies in the ``highly non-cancellative'' nature of the operation of set\-wise multiplication, which results in a variety of algebraic and arithmetical phenomena not observable in the ``nearly cancellative'' scenarios discussed in Sect.~\ref{sec:4.1} (see \cite{Fa-Tr18,An-Tr18} for further details). 

Below we add to this line of research by showing that $\mathcal{P}_{\mathrm{fin}, 1}(H)$ is $\mid_{\mathcal{P}_{\mathrm{fin},1}(H)}$-factorable, and by characterizing the monoids $H$ for which every $X \in \mathcal{P}_{\mathrm{fin}, 1}(H)$ factors as a product of ``ordinary atoms'' (Proposition \ref{prop:4.11}\ref{prop:4.11(ii)} and 
Theorem \ref{thm:4.12}).
In the proofs, we will freely use that $|XY| \ge \max(|X|, |Y|)$ for all $
X, Y \allowbreak \in \allowbreak \mathcal P_{\mathrm{fin},1}(H)$, and that $X \subseteq Y$ when\-ever $X \mid_{\mathcal P_{\mathrm{fin},1}(H)} Y$ (by the fact that each set in $\mathcal P_{\mathrm{fin},1}(H)$ contains $1_H$).

We start with a slight refinement of \cite[Lemma 3.8]{An-Tr18}. We recall that an element $x$ in a monoid is an \evid{i\-dem\-po\-tent} if $x^2 = x$; and is a \evid{proper i\-dem\-po\-tent} if $x$ is an idempotent but not the identity.

\begin{lemma}\label{lem:4.10}
	Let $H$ be a monoid with no proper idempotent. Then $H$ is Dedekind-finite, and the sub\-sem\-i\-group generated by any non-unit of $H$ is infinite.
\end{lemma}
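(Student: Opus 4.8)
The plan is to exploit the defining hypothesis — the absence of proper idempotents — twice, once for each assertion, by manufacturing an idempotent from the given data and then forcing it to equal $1_H$.

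For Dedekind-finiteness I would invoke the characterization recalled in Sect.~\ref{sec:2.2}: it suffices to show that if $xy = 1_H$ for some $x, y \in H$, then at least one of $x$, $y$, $yx$ is a unit. The key observation is that $yx$ is automatically an idempotent, since $(yx)(yx) = y(xy)x = y\,1_H\,x = yx$. Because $H$ has no proper idempotent, this forces $yx = 1_H$; combined with $xy = 1_H$, this makes $x$ and $y$ mutually inverse, hence units. Thus $H$ is Dedekind-finite.

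For the second assertion, let $a$ be a non-unit and let $S = \{a, a^2, a^3, \dots\}$ be the subsemigroup generated by $a$. I would argue by contradiction, assuming $S$ is finite. Then by the Pigeonhole Principle there are $m, n \in \mathbb{N}^+$ with $a^m = a^{m+n}$, from which a routine induction yields the eventual periodicity $a^j = a^{j+n}$ for every $j \ge m$. The crux is then to locate an idempotent among the powers of $a$: choosing $k \in \mathbb{N}^+$ with $kn \ge m$ and setting $e := a^{kn}$, periodicity gives $e^2 = a^{2kn} = a^{kn} = e$. Since $H$ has no proper idempotent, $e = 1_H$, i.e.\ $a^{kn} = 1_H$ with $kn \ge 1$; but then $a^{kn-1}$ is a two-sided inverse of $a$ (powers of $a$ commute), contradicting that $a$ is a non-unit.

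The genuinely substantive step is the extraction of the idempotent power in the finite case — the elementary fact that in a finite monogenic semigroup some power of the generator is idempotent. Everything else (the idempotency of $yx$, the inductive periodicity, and the final unit argument) is short and formal; the only point requiring care is to keep the exponent $kn$ strictly positive, so that the equality $a^{kn} = 1_H$ genuinely exhibits $a$ as invertible.
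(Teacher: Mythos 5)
Your proposal is correct and follows essentially the same route as the paper: both parts hinge on manufacturing an idempotent (the element $yx$ for Dedekind-finiteness, and a suitable power $a^{kn}$ extracted via the Pigeonhole Principle and eventual periodicity for the second claim) and then collapsing it to $1_H$ by the no-proper-idempotent hypothesis. The only differences are cosmetic (choice of exponents and the explicit exhibition of $a^{kn-1}$ as an inverse, which the paper leaves implicit).
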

\begin{proof}
	First, suppose that $yz = 1_H$ for some $y, z \in H$. Then $(zy)^2 = z(yz)y = zy$; and since $H$ has no proper i\-dem\-po\-tents, we conclude that $zy=1_H$. Consequently, $H$ is Dedekind-finite.
	
	Next, assume $\{x, x^2, \ldots\}$ is a finite subset of $H$ for some $x \in H$. By the Pigeonhole Principle, there exist $n, k \in \mathbb N^+$ such that $x^n = x^{n+k}$; and by a routine induction, this implies that $x^n = x^{n+hk}$ for all $h \in \mathbb N$. So, we obtain that $
	(x^{nk})^2 = x^{2nk} = x^{(k+1)n}x^{(k-1)n} = x^n x^{(k-1)n} = x^{nk}$. Since $H$ has no proper idempotents, it follows that $x^{nk}=1_H$. To wit, $x$ is a unit.
\end{proof}

\begin{proposition}\label{prop:4.11}
	Let $H$ be a monoid.
	The following hold:
	\begin{enumerate}[label=\textup{(\roman{*})}]
	\item\label{prop:4.11(i)} $\mathcal{P}_{\mathrm{fin},1}(H)$ is a reduced and Dedekind-finite monoid, and the preorder $\mid_{\mathcal{P}_{\mathrm{fin},1}(H)}$ is artinian.
	\item\label{prop:4.11(ii)} Every $X \in \mathcal{P}_{\mathrm{fin},1}(H)$ factors as a product of $\mid_{\mathcal{P}_{\mathrm{fin},1}(H)}$-irreducibles.
	\item\label{prop:4.11(iii)} A set $A \in \mathcal P_{\mathrm{fin},1}(H)$ is a $\mid_{\mathcal{P}_{\mathrm{fin},1}(H)}$-irreducible if and only if it is a $\mid_{\mathcal{P}_{\mathrm{fin},1}(H)}$-quark.
\end{enumerate}
\end{proposition}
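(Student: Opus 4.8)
The plan is to dispose of (i) and (ii) quickly from the two standing facts recorded just before the statement, and then to spend the real effort on the equivalence in (iii). Throughout I write $\mathcal{P}$ for $\mathcal{P}_{\mathrm{fin},1}(H)$ and use repeatedly that $|XY| \ge \max(|X|,|Y|)$ and that $X \subseteq Y$ whenever $X \mid_{\mathcal{P}} Y$. For (i), I would first observe that $XY = \{1_H\}$ forces $|X| \le |XY| = 1$, so $X = \{1_H\}$ (and likewise $Y = \{1_H\}$); hence the only one-sidedly invertible element of $\mathcal{P}$ is the identity, which makes $\mathcal{P}$ both reduced and Dedekind-finite. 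For artinianity, note that $X \mid_{\mathcal{P}} Y$ gives $X \subseteq Y$, so $X \prec Y$ gives $X \subsetneq Y$ and thus $|X| < |Y|$; the size map $\lambda\colon X \mapsto |X|$ is therefore strictly $\prec$-monotone into $\mathbb N$, and Remark \ref{rem:3.10}\ref{rem:3.10(1)} yields that $\mid_{\mathcal{P}}$ is artinian. Part (ii) is then immediate: since $\mathcal P$ is Dedekind-finite its $\mid_{\mathcal P}$-units coincide with its units (Remark \ref{rem:3.7}\ref{rem:3.7(2)}), namely $\{1_H\}$, so every $X \ne \{1_H\}$ is a $\mid_{\mathcal{P}}$-non-unit and Theorem \ref{thm:3.11} applies.

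For (iii) one implication is free: a $\mid_{\mathcal{P}}$-quark is a $\mid_{\mathcal{P}}$-irreducible by Remark \ref{rem:3.7}\ref{rem:3.7(4)}. For the converse I would argue the contrapositive, proving that a non-quark $A$ is not irreducible. The useful reductions here are that any factorization $A = XY$ forces $X, Y \subseteq A$ (each set contains $1_H$), and that for such a factorization $X \prec A \iff X \subsetneq A \iff X \ne A$ (because $A \mid_{\mathcal{P}} X$ would give $A \subseteq X$, whence $X = A$); consequently it suffices to write $A$ as a product of two \emph{proper} non-units. Since $A$ is not a quark, there is a non-unit $B \subsetneq A$ with $A = PBQ$. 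Grouping this as $(PB)Q$ and as $P(BQ)$ and using that $PB \supseteq B$ is a non-unit, a short case check rules out the degenerate possibility that \emph{every} non-unit factorization of $A$ has both factors equal to $A$; combined with irreducibility (which forces at least one factor to be $A$ in any non-unit factorization), this leaves exactly a one-sided absorption, say $A = XA$ with $X \subsetneq A$ a non-unit, the case $A = AX$ being symmetric.

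The main obstacle is precisely this absorption case, which I expect to be the heart of the matter. The plan is to iterate: from $A = XA$ one gets $A = X^k A$ for all $k$ and an ascending chain $X \subseteq X^2 \subseteq \cdots \subseteq A$ of subsets of $A$, which stabilizes at an idempotent $E = X^m$; then $E \supseteq X$ is a non-unit, $EA = A$, and, $E$ being idempotent and containing $1_H$, it is in fact a finite submonoid of $H$. If some power already equals $A$, taking the first such $m \ge 2$ gives the proper factorization $A = X^{m-1}X$ and we are done. Otherwise $E \subsetneq A$, and the decisive step is to pick $T \subseteq A$ with $1_H \in T$ that is \emph{minimal} for $ET = A$: since $E$ is a submonoid, any $e \in (E \setminus \{1_H\}) \cap T$ is redundant, as $Ee \subseteq EE = E \subseteq E(T \setminus \{e\})$, so a minimal $T$ avoids the non-empty set $E \setminus \{1_H\}$ and is thus a proper non-unit subset of $A$; then $A = ET$ exhibits $A$ as a product of two proper non-units, contradicting irreducibility.

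The one place I expect to need genuine care is the bookkeeping around the minimal transversal $T$: one must check that $T$ is simultaneously a proper subset of $A$ (this is where the submonoid structure of $E$ and $E \setminus \{1_H\} \ne \emptyset$ enter) and of size at least $2$ (otherwise $T = \{1_H\}$ would give $ET = E \subsetneq A$), so that $A = ET$ really is a factorization into two \emph{non-unit} proper divisors. Granting this, every branch of the analysis ends in a contradiction with irreducibility, so every $\mid_{\mathcal{P}}$-irreducible is a $\mid_{\mathcal{P}}$-quark, completing (iii).
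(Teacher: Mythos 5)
Your proposal is correct. Parts \ref{prop:4.11(i)} and \ref{prop:4.11(ii)} and the reduction in \ref{prop:4.11(iii)} match the paper's proof almost step for step: the paper likewise deduces reducedness and Dedekind-finiteness from $|XY| \ge \max(|X|,|Y|)$, gets artinianity from the observation that a $\mid$-non-increasing sequence in $\mathcal P_{\mathrm{fin},1}(H)$ is non-increasing under inclusion among \emph{finite} sets (your detour through the size function and Remark~\ref{rem:3.10}\ref{rem:3.10(1)} is equivalent), invokes Corollary~\ref{cor:4.1} for \ref{prop:4.11(ii)}, and in \ref{prop:4.11(iii)} regroups $A = UBV$ as $(UB)V$ or $U(BV)$ to force, via irreducibility, a one-sided absorption $A = AB$ with $\{1_H\} \subsetneq B \subsetneq A$. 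Where you genuinely diverge is in resolving that absorption case, which you rightly identify as the heart of the matter: you iterate to the stabilized power $E = X^m$, observe that $E$ is a finite submonoid of $H$ with $EA = A$, and extract a minimal transversal $T$ with $ET = A$ that must avoid $E \setminus \{1_H\}$; the paper instead simply picks $b \in B \setminus \{1_H\}$, sets $A_b := A \setminus \{b\}$, and checks in one line that $A_b B \subseteq AB = A = A_b \cup \{b\} \subseteq A_b B$, so that $A = A_b B$ is already a factorization into two proper non-units. The paper's device is shorter and requires no stabilization or minimality argument; yours costs more bookkeeping, all of which does check out ($T \subsetneq A$ because $T$ misses the non-empty subset $E \setminus \{1_H\}$ of $A$, and $T \ne \{1_H\}$ because $E\{1_H\} = E \subsetneq A$), but has the mild virtue of exposing the idempotent submonoid structure hiding behind the identity $A = AB$.
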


\begin{proof} 
\ref{prop:4.11(ii)} is straightforward from \ref{prop:4.11(i)} and Cor\-ol\-lar\-y \ref{cor:4.1} (in particular, note that the identity of $\mathcal P_{\mathrm{fin},1}(H)$ is an empty product of $\mid_{\mathcal P_{\mathrm{fin},1}(H)}$-irreducibles), so we will focus attention on \ref{prop:4.11(i)} and \ref{prop:4.11(iii)}. 
	
\vskip 0.05cm
	
\ref{prop:4.11(i)} Since $|XY| \ge \max(|X|, |Y|)$ for all $X, Y \in \mathcal P_{\mathrm{fin},1}(H)$, it is clear that $XY = \{1_H\}$ if and only if $X$ and $Y$ are singletons, if and only if $X = Y = \{1_H\}$. Thus $\mathcal{P}_{\mathrm{fin},1}(H)$ is reduced and Dedekind-finite. 

On the other hand, since $X$ is contained in $Y$ whenever $X \mid_{\mathcal P_{\mathrm{fin},1}(H)} Y$, it is evident that a $\mid$-non-in\-creas\-ing sequence $(X_k)_{k \ge 0}$ of sets in $\mathcal P_{\mathrm{fin},1}(H)$ is also non-increasing with respect to inclusion; and this, in turn, can only happen if $X_{k+1} = X_k$ for all large $k \in \mathbb N$, because the el\-e\-ments of $\mathcal P_{\mathrm{fin},1}(H)$ are \emph{finite} sets. In consequence, $\mid_{\mathcal P_{\mathrm{fin},1}(H)}$ is an artinian preorder.

	\vskip 0.05cm
	
	\ref{prop:4.11(iii)} It suffices to prove the ``only if'' direction, cf.~Remark \ref{rem:3.7}\ref{rem:3.7(4)}. For, suppose by way of contradiction that there is a $\mid_{\mathcal P_{\mathrm{fin},1}(H)}$-irreducible $A$ of $\mathcal P_{\mathrm{fin},1}(H)$ which is not a $\mid_{\mathcal P_{\mathrm{fin},1}(H)}$-quark. Since $X \subseteq Y$ whenever $X \mid_{\mathcal P_{\mathrm{fin},1}(H)} Y$ and, by part \ref{prop:4.11(i)}, $\mathcal P_{\mathrm{fin},1}(H)$ is reduced and Dedekind-finite, there then exists $B \in \mathcal P_{\mathrm{fin},1}(H)$ such that $B \mid_{\mathcal P_{\mathrm{fin},1}(H)} A$ and $\{1_H\} \subsetneq B \subsetneq A$. It follows that $A = UBV$ for some $U, V \in \mathcal P_{\mathrm{fin},1}(H)$ with $U \ne \allowbreak \{1_H\}$ or $V \ne \{1_H\}$, and this is only possible if $A = UB$ or $A = BV$: Otherwise, each of $U$, $V$, $UB$, and $BV$ is a proper sub\-set of $A$, because $U \subseteq UB \subseteq A$ and $V \subseteq BV \subseteq A$; therefore, $A = XY$ for some $\mid_{\mathcal P_{\mathrm{fin},1}(H)}$-non-units $X$ and $Y$ such that $A \nmid_{\mathcal P_{\mathrm{fin},1}(H)} X$ and $A \nmid_{\mathcal P_{\mathrm{fin},1}(H)} Y$ (where $X := U$ and $Y := BV$ if $U \ne \allowbreak \{1_H\}$, and $X := UA$ and $Y := V$ if $V \ne \allowbreak \{1_H\}$), contradicting that $A$ is a $\mid_{\mathcal P_{\mathrm{fin},1}(H)}$-irreducible.
	
	So, assume $A = UB$ (the other case is similar). Then $U \ne \{1_H\}$ (because $B \subsetneq A$) and hence $U = \allowbreak A$: Otherwise, $UB$ is a factorization of $A$ into two $\mid_{\mathcal P_{\mathrm{fin},1}(H)}$-non-units with $A \nmid_{\mathcal P_{\mathrm{fin},1}(H)} U$ and $A \nmid_{\mathcal P_{\mathrm{fin},1}(H)} \allowbreak B$, again in contradiction to the $\mid_{\mathcal P_{\mathrm{fin},1}(H)}$-irreducibility of $A$. As a result, we have
	\[
	\{1_H\} \subsetneq \allowbreak B \subsetneq \allowbreak AB = \allowbreak A.
	\]
	Ac\-cord\-ing\-ly, pick an element $b \in B \setminus \{1_H \} \subseteq A$ and set $A_b := A \setminus \{b\}$. Then $2 \le |A_b | < |A|$ (note that $\{1_H, b\} \subseteq \allowbreak B \subsetneq \allowbreak A$); and since $1_H \in A_b \cap B$, it is readily seen that
	\[
	A_b B \subseteq AB = A = A_b \cup \{b\} \subseteq A_b B \cup \{b\} \subseteq A_b B \cup B = A_b B,
	\]
	namely, $A = A_b B$. But yet again, this means that $A$ is not a $\mid_{\mathcal P_{\mathrm{fin},1}(H)}$-irreducible (absurd).
\end{proof}

The next result is a sensible refinement of \cite[Theorem 3.9]{An-Tr18}, where it is proved that, for a monoid $H$, every set in $\mathcal P_{\mathrm{fin},1}(H)$ factors as a product of atoms if and only if $1_H \ne x^2 \ne x$ for all $x \in H \setminus \{1_H\}$: The key dif\-fer\-ence is that, by Proposition \ref{prop:4.11}, we here already know that every set in $\mathcal P_{\mathrm{fin},1}(H)$ factors as a product of $\mid_{\mathcal P_{\mathrm{fin},1}(H)}$-quarks (regardless of any condition on $H$).

\begin{theorem}\label{thm:4.12}
The following are equivalent for a monoid $H$:
	
	\begin{enumerate}[label=\textup{(\alph{*})}]
		\item\label{thm:4.12(a)} $1_H \ne x^2 \ne x$ for each $x \in H \setminus \{1_H\}$.
		\item\label{thm:4.12(b)} Each $\mid_{\mathcal P_{\mathrm{fin},1}(H)}$-quark of $\mathcal P_{\mathrm{fin}, 1}(H)$ is an atom, and vice versa.
		\item\label{thm:4.12(c)} Every $X \in \mathcal P_{\mathrm{fin},1}(H)$ factors as a product of atoms.
	\end{enumerate}
\end{theorem}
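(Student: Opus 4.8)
The plan is to prove the cycle of implications \ref{thm:4.12(a)} $\Rightarrow$ \ref{thm:4.12(b)} $\Rightarrow$ \ref{thm:4.12(c)} $\Rightarrow$ \ref{thm:4.12(a)}, leaning throughout on Proposition \ref{prop:4.11}. Two preliminary observations streamline everything. First, $\mid_{\mathcal P_{\mathrm{fin},1}(H)}$ is in fact a partial \emph{order}: since $U \mid_{\mathcal P_{\mathrm{fin},1}(H)} V$ forces $U \subseteq V$ (each set contains $1_H$), the relations $X \mid_{\mathcal P_{\mathrm{fin},1}(H)} Y$ and $Y \mid_{\mathcal P_{\mathrm{fin},1}(H)} X$ give $X \subseteq Y \subseteq X$, hence $X = Y$; so for a non-unit $A$ the relation $B \prec A$ is the same as ``$B \mid_{\mathcal P_{\mathrm{fin},1}(H)} A$ and $B \ne A$''. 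Second, \emph{every atom of $\mathcal P_{\mathrm{fin},1}(H)$ is a $\mid_{\mathcal P_{\mathrm{fin},1}(H)}$-quark}, with no hypothesis on $H$: indeed $\mathcal P_{\mathrm{fin},1}(H)$ is Dedekind-finite (Proposition \ref{prop:4.11}\ref{prop:4.11(i)}), so its atoms are its $\mid$-atoms (Remark \ref{rem:3.7}\ref{rem:3.7(2)}), every $\mid$-atom is a $\mid$-irreducible (Remark \ref{rem:3.7}\ref{rem:3.7(4)}), and every $\mid$-irreducible is a $\mid$-quark (Proposition \ref{prop:4.11}\ref{prop:4.11(iii)}). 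This settles the ``vice versa'' clause of \ref{thm:4.12(b)} once and for all, so in proving \ref{thm:4.12(a)} $\Rightarrow$ \ref{thm:4.12(b)} only the implication ``quark $\Rightarrow$ atom'' remains.

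The heart of the argument, and the step I expect to be the main obstacle, is \ref{thm:4.12(a)} $\Rightarrow$ \ref{thm:4.12(b)}. Assume \ref{thm:4.12(a)}, let $A$ be a $\mid_{\mathcal P_{\mathrm{fin},1}(H)}$-quark, and suppose for contradiction that $A$ is not an atom, say $A = XY$ with $X, Y$ non-units. Since $X \mid_{\mathcal P_{\mathrm{fin},1}(H)} A$ (as $A = XY$) and $A$ is a quark, the order property forces $X = A$, and likewise $Y = A$; thus $A = A^2$. In other words, $A$ is closed under multiplication, i.e.\ a finite submonoid of $H$ with $|A| \ge 2$. Now fix $a \in A \setminus \{1_H\}$ and set $B := \{1_H, a\}$. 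Because $A$ is a submonoid containing $a$, one checks that $AB = A \cup Aa = A$, so $B \mid_{\mathcal P_{\mathrm{fin},1}(H)} A$. If $B \ne A$, then $B$ is a non-unit with $B \prec A$, contradicting that $A$ is a quark; and if $B = A$, then $A = \{1_H, a\}$ and the identity $A = A^2 = \{1_H, a, a^2\}$ forces $a^2 \in \{1_H, a\}$, contradicting \ref{thm:4.12(a)}. Either way we reach a contradiction, so $A$ must be an atom. (One could invoke Lemma \ref{lem:4.10} to analyze the idempotent set $A$, but the two-case argument above keeps the proof self-contained.)

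The remaining implications are short. For \ref{thm:4.12(b)} $\Rightarrow$ \ref{thm:4.12(c)}, Proposition \ref{prop:4.11}\ref{prop:4.11(ii)} together with \ref{prop:4.11(iii)} shows that every $X \in \mathcal P_{\mathrm{fin},1}(H)$ factors as a product of $\mid_{\mathcal P_{\mathrm{fin},1}(H)}$-quarks, and since \ref{thm:4.12(b)} says each such quark is an atom, this is already a factorization of $X$ into atoms. For \ref{thm:4.12(c)} $\Rightarrow$ \ref{thm:4.12(a)} I argue by contraposition: if some $x \in H \setminus \{1_H\}$ satisfies $x^2 \in \{1_H, x\}$, then $A := \{1_H, x\}$ satisfies $A^2 = A$, so $A$ is a non-unit that equals the product $A \cdot A$ of two non-units and is therefore not an atom. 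But any factorization $A = A_1 \cdots A_n$ into atoms would give $A_i \subseteq A$ with $|A_i| \ge 2$ for each $i$, whence $A_i = A$, which is absurd since $A$ is not an atom. Thus $A$ admits no factorization into atoms, \ref{thm:4.12(c)} fails, and the cycle is complete.
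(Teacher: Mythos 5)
Your proof is correct, and the overall architecture (the cycle \ref{thm:4.12(a)} $\Rightarrow$ \ref{thm:4.12(b)} $\Rightarrow$ \ref{thm:4.12(c)} $\Rightarrow$ \ref{thm:4.12(a)}, the reduction of the ``vice versa'' clause to Proposition \ref{prop:4.11}\ref{prop:4.11(iii)} via Remarks \ref{rem:3.7}\ref{rem:3.7(2)} and \ref{rem:3.7(4)}, and the contrapositive for \ref{thm:4.12(c)} $\Rightarrow$ \ref{thm:4.12(a)}) matches the paper's. The genuine difference is in the core step \ref{thm:4.12(a)} $\Rightarrow$ \ref{thm:4.12(b)}: after deriving $A = A^2$ exactly as the paper does, the paper invokes Lemma \ref{lem:4.10} to conclude that a chosen $a \in A \setminus \{1_H\}$ is a unit of $H$ of finite order $n+1$, and then produces the witness $B := A \setminus \{a^n\}$ with $A = AB$. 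You instead observe that $A = A^2$ makes $A$ a finite submonoid of $H$, take $B := \{1_H, a\}$ directly, and get $AB = A \cup Aa = A$ for free; the dichotomy $B \subsetneq A$ (contradicting quark-ness, since $\mid_{\mathcal P_{\mathrm{fin},1}(H)}$ refines inclusion and is therefore an order) versus $B = A$ (forcing $a^2 \in \{1_H, a\}$, against \ref{thm:4.12(a)}) closes the argument. Your route is shorter and entirely self-contained, avoiding Lemma \ref{lem:4.10} and any discussion of the order of $a$; what the paper's route buys is the extra structural information that every element of such an idempotent set $A$ is a torsion unit of $H$, which is in the spirit of \cite[Lemma 3.8]{An-Tr18} but is not needed for the theorem itself. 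All the small verifications you rely on (that $\mid_{\mathcal P_{\mathrm{fin},1}(H)}$ is antisymmetric, that $X \subseteq XY = A$ forces $X = A$ for a quark $A$, that the atom factorization in \ref{thm:4.12(c)} $\Rightarrow$ \ref{thm:4.12(a)} is non-empty) check out.
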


\begin{proof}
	The implication \ref{thm:4.12(b)} $\Rightarrow$ \ref{thm:4.12(c)} is a trivial consequence of parts \ref{prop:4.11(ii)} and \ref{prop:4.11(iii)} of Prop\-o\-si\-tion \ref{prop:4.11}. So we will concentrate on proving that \ref{thm:4.12(a)} $\Rightarrow$ \ref{thm:4.12(b)} and \ref{thm:4.12(c)} $\Rightarrow$ \ref{thm:4.12(a)}.
	
	\vskip 0.05cm
	
	\ref{thm:4.12(a)} $\Rightarrow$ \ref{thm:4.12(b)}: Suppose for a contradiction that there is a $\mid_{\mathcal P_{\mathrm{fin}, 1}(H)}$-quark $A \in \mathcal P_{\mathrm{fin}, 1}(H)$ which is not an atom.
	Then $A = XY$ for some non-units $X, Y \allowbreak \in \mathcal P_{\mathrm{fin},1}(H)$, which gives by Prop\-o\-si\-tion \ref{prop:4.11}\ref{prop:4.11(i)} that each of $X$ and $Y$ is a $\mid_{\mathcal P_{\mathrm{fin}, 1}(H)}$-non-unit dividing $A$. But this can only happen if $A$, in turn, divides each of $X$ and $Y$,  because $A$ is a $\mid_{\mathcal P_{\mathrm{fin}, 1}(H)}$-quark. So, using that, in $\mathcal P_{\mathrm{fin}, 1}(H)$, ``to divide'' implies ``to be contained'', we conclude that $X = Y = A$ and hence $A = XY = A^2$. It follows (by a routine induction) that $A = A^n$ for every $n \in \mathbb N^+$. In consequence, it is clear that $A = \bigcup_{n \ge 1} A^n$.
		
	Now, pick $a \in A \setminus \{1_H\}$. The subsemigroup of $H$ generated by $\{a\}$ is finite, as we have that $|A| < \infty$ and $\{a, a^2, \ldots\} \subseteq \bigcup_{n \ge 1} A^n = A$. Since $1_H \ne x^2 \ne x$ for each $x \in H \setminus \{1_H\}$ (by hypothesis), we are thus guar\-an\-teed by Lemma \ref{lem:4.10} that $a$ is a unit of $H$ and there exists a smallest integer $n \ge 2$ such that $a^{n+1} = \allowbreak 1_H$. So, setting $B := A \setminus \{a^n\}$ and considering that $A = A^{n+1}$ (as shown above), we obtain 
	\[
	\{1_H, a\} \subseteq B \subsetneq A \subseteq AB \subseteq A^2 = AB \cup A a^n \subseteq AB \cup A^{n+1} = AB \cup A = AB.
	\]	
	Then $A = A^2 = AB$ and hence $B \mid_{\mathcal P_{\mathrm{fin}, 1}(H)} A$. But this contradicts that $A$ is a $\mid_{\mathcal P_{\mathrm{fin}, 1}(H)}$-quark, because $\{1_H\} \subsetneq B \subsetneq A$ and hence $A \nmid_{\mathcal P_{\mathrm{fin}, 1}(H)} B$  (recall that the only $\mid_{\mathcal P_{\mathrm{fin}, 1}(H)}$-unit of $\mathcal P_{\mathrm{fin}, 1}(H)$ is the identity).
	
	Every $\mid_{\mathcal P_{\mathrm{fin}, 1}(H)}$-quark is therefore an atom. For the converse, we have from Prop\-o\-si\-tion \ref{prop:4.11}\ref{prop:4.11(i)} and Remark \ref{rem:3.7}\ref{rem:3.7(2)} that every atom of $\mathcal P_{\mathrm{fin}, 1}(H)$ is a $\mid_{\mathcal P_{\mathrm{fin}, 1}(H)}$-atom; and from Remark \ref{rem:3.7}\ref{rem:3.7(4)} and Prop\-o\-si\-tion \ref{prop:4.11}\ref{prop:4.11(iii)} that every $\mid_{\mathcal P_{\mathrm{fin}, 1}(H)}$-atom is a $\mid_{\mathcal P_{\mathrm{fin}, 1}(H)}$-quark. So, every atom is a $\mid_{\mathcal P_{\mathrm{fin}, 1}(H)}$-quark.
	
	\vskip 0.05cm
	
	\ref{thm:4.12(c)} $\Rightarrow$ \ref{thm:4.12(a)}: Assume to the contrary that there exists an element $x \in H \setminus \{1_H\}$ with $x^2 = 1_H$ or $x^2 = x$. Since $\mathcal P_{\mathrm{fin},1}(H)$ is a reduced  monoid, $\{1_H, x\}$ is by hypothesis a (non-empty) product $A_1 \cdots A_n$ of atoms $A_1, \ldots, A_n \in \mathcal P_{\mathrm{fin},1}(H)$. It follows that $A_i = \{1_H, x\}$ for each $i \in \llb 1, n \rrb$, because $\{1_H\} \subsetneq A_i \subseteq \{1_H, x\}$. This however contradicts that $A_i$ is an atom, by the fact that $\{1_H, x\} = \{1_H, x, x^2\} = \{1_H, x\}^2$.
\end{proof}

It is perhaps worth remarking that there is no obvious way to derive Proposition \ref{prop:4.11}\ref{prop:4.11(iii)} from Corollary \ref{cor:4.4}, or Theorem \ref{thm:4.12} from Proposition \ref{prop:4.3} and Theorem \ref{thm:4.5}: The reason is that, in general, the monoid $\mathcal P_{\mathrm{fin},1}(H)$ is far from being unit-cancellative (for instance, it is clear that $HX = XH = H$ for every $X \subseteq \allowbreak H$, implying that $\mathcal P_{\mathrm{fin},1}(H)$ is unit-cancellative only if $H \notin \mathcal P_{\mathrm{fin},1}(H)$, that is, $|H| = \infty$).

\subsection{Categories and ``object decompositions''}
\label{sec:4.3}
We set out with a quick review of some basic aspects of cat\-e\-go\-ry theory we will need below: We refer the reader to \cite{MacLane98} for all terms used herein without def\-i\-ni\-tion, and we recall from Sect.~\ref{sec:2.1} that we choose Tarski-Grothendieck set theory as a foundation; in particular, we will assume that the objects and the morphisms of the categories we are going to consider all belong to a fixed Grothendieck universe $\mathscr U$.

Let $\mathcal C$ be a category. We denote by $\mathrm{Ob}(\mathcal C)$ and $\mathrm{Arr}(\mathcal C)$, resp., the class of objects and the class of arrows (or morphisms) of $\mathcal C$; and given $A, B \in \mathrm{Ob}(\mathcal C)$, we use $\mathrm{Arr}_\mathcal{C}(A, B)$ for the class of all arrows $f \in \mathrm{Arr}(\mathcal C)$ with domain $A$ and codomain $B$. As usual, an object $T \in \mathrm{Ob}(\mathcal C)$ is \textsf{terminal} if $\mathrm{Arr}_\mathcal{C}(A, T)$ is a singleton for each $A \in \mathrm{Ob}(\mathcal C)$; and an object $P \in \mathrm{Ob}(\mathcal C)$ is a \textsf{product} of an indexed set $(A_i)_{i \in I}$ of objects of $\mathcal C$ if, for each $i \in I$, there is an arrow $p_i \in \mathrm{Arr}_\mathcal{C}(P, A_i)$ for which the following universal property holds: 

\begingroup
\addtolength\leftmargini{-0.1cm}
\begin{quote}
However we choose an object $Q \in \mathrm{Ob}(\mathcal C)$ and an indexed family $(q_i \colon Q \to A_i)_{i \in I}$ of arrows of $\mathcal C$, there is a unique $u \in \mathrm{Arr}_\mathcal{C}(Q, P)$ such that $q_i = p_i \circ_\mathcal{C} u$ for each $i \in I$, where we write $g \circ_\mathcal{C} f$ for the composite of a pair $(f,g) \in \mathrm{Arr}(\mathcal C) \times \mathrm{Arr}(\mathcal C)$ such that the codomain of $f$ is the same as the domain of $g$. 
\end{quote}
\endgroup
\noindent{}It is an elementary fact that a product, when it exists, is unique up to isomorphism; and that an emp\-ty product is nothing else than a terminal object (see \cite[Sect.~III.4]{MacLane98} for further details).

Suppose now that $\mathcal C$ is a category \evid{with finite products}, meaning that every set of objects of $\mathcal C$ indexed by a finite set has a product in $\mathcal C$: By \cite[Sect.~III.5, Proposition 1]{MacLane98}, this is equivalent to requiring that $\mathcal C$ has a terminal object and each pair $(A, B)$ of objects of $\mathcal C$ has a product in $\mathcal C$. We denote by $\mathcal V(\mathcal C)$ the quo\-tient of $\mathrm{Ob}(\mathcal C)$ by the equivalence relation that identifies two objects $A$ and $B$ of $\mathcal C$ if and only if there is an i\-so\-mor\-phism $u \in \mathrm{Arr}_\mathcal{C}(A, B)$; and we call an equivalence class in $\mathcal V(\mathcal C)$ an \textsf{isomorphism class} of $\mathcal C$. Accordingly, we can construct a monoid out of the objects of $\mathcal C$ by endowing the quotient $\mathcal V(\mathcal C)$ with the (binary) operation that maps a pair $(\alpha, \beta)$ of isomorphism classes of $\mathcal C$ to the isomorphism class of a product $A \myprod B \in \mathrm{Ob}(\mathcal C)$ of an object $A \in \alpha$ by an object $B \in \beta$: The operation is well defined by the universal property of products and makes the class $\mathcal V(\mathcal C)$ into a reduced, commutative monoid (see, e.g., \cite[Lemma 1.17]{Fa19}), herein referred to as the \textsf{direct monoid of isomorphism classes} of $\mathcal C$ and, by abuse of notation, identified with $\mathcal V(\mathcal C)$. This leads to the following:

\begin{corollary}\label{cor:4.13}
Let $\mathcal C$ be a category with finite products and assume there exists a function $\lambda: \mathrm{Ob}(\mathcal C) \to \mathbb N$ such that, for all $A, B \in \mathrm{Ob}(\mathcal C)$, the following hold:
\begin{enumerate}[label=\textup{(\arabic{*})}]
\item\label{cor:4.13(1)} $\lambda(A) = 0$ if and only if $A$ is a terminal object;
\item\label{cor:4.13(2)} $\lambda(A) + \lambda(B) \le \lambda(A \myprod B)$ for every product $A \myprod B \in \mathrm{Ob}(\mathcal C)$ of $A$ by $B$.
\end{enumerate}
Then every $X \in \mathrm{Ob}(\mathcal C)$ is isomorphic to a finite product of \textsf{directly irreducible objects}, i.e., non-terminal objects of $\mathcal C$ each of which is not i\-so\-mor\-phic to a product of two non-terminal objects.
\end{corollary}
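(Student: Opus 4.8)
The plan is to recast the statement in the monoid-theoretic language of Section~\ref{sec:3} and then invoke the machinery already developed. The discussion preceding the corollary equips us with the direct monoid $H := \mathcal V(\mathcal C)$ of isomorphism classes, a reduced and commutative (hence Dedekind-finite) monoid whose identity $1_H$ is the isomorphism class of the terminal objects. Under the identification of an object with its class, a $\mid_H$-non-unit is exactly the class of a non-terminal object, an atom of $H$ is exactly the class of a directly irreducible object, and the product in $H$ is induced by $\myprod$. Thus proving that every $X \in \mathrm{Ob}(\mathcal C)$ is isomorphic to a finite product of directly irreducible objects amounts to showing that every non-unit of $H$ factors as a product of atoms, a terminal object corresponding to the empty product.

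The crux — and the step I expect to require the most care — is to turn $\lambda$ into a genuine monoid-theoretic ``length'' on $H$. The hypotheses do not literally assert that $\lambda$ is constant on isomorphism classes, but this can be extracted from condition~\ref{cor:4.13(2)}: if $A \cong B$ and $T$ is terminal, then an isomorphism $A \to B$ together with the unique arrow $A \to T$ exhibits $A$ as a product of $B$ by $T$, whence $\lambda(B) = \lambda(B) + \lambda(T) \le \lambda(A)$; the symmetric argument gives $\lambda(A) \le \lambda(B)$. Consequently $\lambda$ descends to a well-defined $\Lambda \colon H \to \mathbb N$ which, by conditions~\ref{cor:4.13(1)} and~\ref{cor:4.13(2)}, is superadditive, i.e.\ $\Lambda(\alpha\beta) \ge \Lambda(\alpha) + \Lambda(\beta)$, and satisfies $\Lambda(\alpha) = 0$ if and only if $\alpha = 1_H$.

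From these two properties the remaining hypotheses of the earlier theorems fall out. First I would check that $H$ is acyclic: in the commutative monoid $H$, a relation $uxv = x$ with $u \ne 1_H$ or $v \ne 1_H$ would force $\Lambda(x) \ge \Lambda(x) + \Lambda(u) + \Lambda(v) \ge \Lambda(x) + 1$, an absurdity. Next I would verify that the divisibility preorder $\mid_H$ is artinian by applying Remark~\ref{rem:3.10}\ref{rem:3.10(1)} with $\Lambda$ as the required $\mathbb N$-valued function: if $x \prec y$, write $y = xz$; were $\Lambda(x) = \Lambda(y)$, superadditivity would give $\Lambda(z) = 0$, hence $z = 1_H$ and $y = x$, contradicting $y \nmid_H x$. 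Therefore $\Lambda(x) < \Lambda(y)$ whenever $x \prec y$, and $\mid_H$ is artinian.

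With $H$ acyclic and $\mid_H$ artinian, condition~\ref{thm:4.5(b)} of Theorem~\ref{thm:4.5} is satisfied, so its ``Moreover'' clause yields that every non-unit of $H$ factors as a non-empty finite product of atoms (equivalently, one could combine Corollary~\ref{cor:4.1} with Corollary~\ref{cor:4.4}, which identifies $\mid_H$-irreducibles with atoms in the acyclic setting). Since the atoms of $H$ are precisely the classes of directly irreducible objects and $1_H$ is the class of the terminal objects, translating back through the identification of $H$ with $\mathcal V(\mathcal C)$ gives that every object of $\mathcal C$ is isomorphic to a finite product of directly irreducible objects, which completes the argument.
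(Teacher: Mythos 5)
Your proposal is correct and follows essentially the same route as the paper: pass to the direct monoid $\mathcal V(\mathcal C)$, use $\lambda$ to establish unit-cancellativity (equivalently, acyclicity, since the monoid is commutative) and the artinianity of the divisibility preorder via Remark \ref{rem:3.10}\ref{rem:3.10(1)}, and then invoke Corollary \ref{cor:4.1} together with Corollary \ref{cor:4.4} (the paper's choice) or equivalently Theorem \ref{thm:4.5}. Your explicit check that $\lambda$ is constant on isomorphism classes --- by exhibiting an object as a product of an isomorphic copy with a terminal object --- is a detail the paper's proof leaves implicit, and it is a correct and welcome addition.
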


\begin{proof}
As noted in the comments above, the direct monoid $\mathcal V(\mathcal C)$ of isomorphism classes of $\mathcal C$ is reduced and commutative: Its identity is the isomorphism class of the terminal objects of $\mathcal C$. On the other hand, we have from conditions \ref{cor:4.13(1)} and \ref{cor:4.13(2)} that $\lambda(B) \le \lambda(A \myprod B)$ for all $A, B \in \mathrm{Ob}(\mathcal C)$ and every representative $A \myprod B \in \mathrm{Ob}(\mathcal C)$ of the product of $A$ by $B$, with equality if and only if $A$ is terminal. So, it is clear that $\mathcal V(\mathcal C)$ is unit-cancellative and, by Remark \ref{rem:3.10}\ref{rem:3.10(1)}, the divisibility preorder on $\mathcal V(\mathcal C)$ is artinian. Therefore, we get from Corollaries \ref{cor:4.1} and \ref{cor:4.4} that every isomorphism class of $\mathcal C$ factors as a (finite) product of atoms of $\mathcal V(\mathcal C)$. This finishes the proof, because an atom of $\mathcal V(\mathcal C)$ is, obviously, the isomorphism class of a directly irreducible object.
\end{proof}

Corollary \ref{cor:4.13} has many ``concrete realizations''. Below, we discuss one of them in detail: The focus will be on modules, but the same argument can be adapted to a whole variety of other objects for which a ``well-behaved'' notion of ``dimension'', ``rank'', etc., is available.

To begin, fix a (commutative or non-commutative) ring $R$. Following \cite[Definition (6.2) and Corollary (6.6)]{La99}, we let the \textsf{uniform dimension} $\dim_R(M)$ of a (left) $R$-module $M$ be the supremum of the set
\[
\{k \in \mathbb N^+ \colon N_1 \oplus_R \cdots \oplus_R N_k \text{ embeds into }M, \text{ for some non-zero } R\text{-submodules } N_1, \ldots, N_k\},
\] 
where $\oplus_R$ denotes a direct sum of $R$-modules and we take $\sup \emptyset := 0$. 
It is a fundamental fact that the u\-ni\-form dimension is \emph{additive}, in the sense that
\begin{equation}\label{equ:4.3(3)}
\dim_R(M \oplus_R N) = \dim_R(M) + \dim_R(N), 
\qquad 
\text{for all } R\text{-modules } M \text{ and } N,
\end{equation}
see \cite[Corollary (6.10), Part (1)]{La01}. This leads straight to the following:

\begin{corollary}\label{cor:4.14}
Let $R$ be a ring. Every $R$-module of finite uniform dimension is equal to a direct sum of finitely many indecomposable $R$-modules.
\end{corollary}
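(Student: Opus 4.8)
The plan is to realize Corollary~\ref{cor:4.14} as an instance of the ``object decomposition theorem'' Corollary~\ref{cor:4.13}, applied to a suitable category of $R$-modules, with the uniform dimension playing the role of the function $\lambda$. First I would fix the ring $R$ and take $\mathcal{C}$ to be the category of $R$-modules of \emph{finite} uniform dimension, with $R$-module homomorphisms as arrows. The crucial structural observation is that in this category the categorical product of two objects $M$ and $N$ is their direct sum $M \oplus_R N$; this is standard, since in a module category finite products and finite coproducts coincide (both being the biproduct $M \oplus_R N$). I would then verify that $\mathcal{C}$ is closed under these products, which is precisely where the additivity formula \eqref{equ:4.3(3)} enters: if $\dim_R(M)$ and $\dim_R(N)$ are both finite, then so is $\dim_R(M \oplus_R N) = \dim_R(M) + \dim_R(N)$, so $M \oplus_R N$ again lies in $\mathcal{C}$. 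The zero module is a terminal (indeed zero) object of $\mathcal{C}$.

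Next I would set $\lambda := \dim_R(\cdot)$, viewed as a function $\mathrm{Ob}(\mathcal{C}) \to \mathbb{N}$ (it is $\mathbb{N}$-valued precisely because we restricted to finite uniform dimension), and check the two hypotheses of Corollary~\ref{cor:4.13}. Condition \ref{cor:4.13(1)} requires that $\dim_R(M) = 0$ if and only if $M$ is terminal, i.e.\ $M$ is a zero module; this follows immediately from the definition of uniform dimension, since $\dim_R(M) \ge 1$ exactly when $M$ contains a non-zero submodule, i.e.\ when $M \ne 0$. Condition \ref{cor:4.13(2)} asks that $\lambda(M) + \lambda(N) \le \lambda(M \myprod N)$; here equality in fact holds by \eqref{equ:4.3(3)} together with the identification of the categorical product $M \myprod N$ with $M \oplus_R N$, so the inequality is satisfied a fortiori.

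With both hypotheses in place, Corollary~\ref{cor:4.13} yields that every object $M \in \mathrm{Ob}(\mathcal{C})$ is isomorphic to a finite product of directly irreducible objects. It then remains to translate ``directly irreducible object'' back into the module-theoretic language of the statement: a directly irreducible object of $\mathcal{C}$ is a non-terminal (i.e.\ non-zero) module that is not isomorphic to a product of two non-terminal modules, which, under the identification of products with direct sums, is exactly a non-zero $R$-module that is not a direct sum of two non-zero $R$-modules --- that is, an \emph{indecomposable} $R$-module in the sense of item \ref{it:F1}. Finally, since the product in $\mathcal{C}$ is the \emph{internal} direct sum (the biproduct comes with its canonical injections and projections), a factorization of $M$ up to isomorphism as a finite product of indecomposables upgrades to the assertion that $M$ \emph{equals} an internal direct sum of finitely many indecomposable submodules, giving the stated conclusion.

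The main obstacle is not any single hard estimate but rather getting the categorical bookkeeping exactly right: one must be careful that the category $\mathcal{C}$ is genuinely closed under finite products (which is why restricting to finite uniform dimension, rather than working with all $R$-modules, is essential), that the categorical product really is the biproduct $\oplus_R$ so that the additivity \eqref{equ:4.3(3)} can be invoked verbatim for condition \ref{cor:4.13(2)}, and that ``directly irreducible'' and ``indecomposable'' coincide under this dictionary. Once these identifications are made explicit, the decomposition is an immediate corollary; the real content has already been absorbed into Corollary~\ref{cor:4.13} and the additivity of uniform dimension.
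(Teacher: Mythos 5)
Your proposal is correct and follows essentially the same route as the paper: form the full subcategory of $\mathsf{Mod}_R$ on modules of finite uniform dimension, use the additivity of $\dim_R$ to see it is closed under finite products (the paper makes this precise by noting that the fully faithful inclusion reflects limits), take $\lambda = \dim_R$, and apply Corollary \ref{cor:4.13}, identifying directly irreducible objects with indecomposable modules. The final upgrade from ``isomorphic to a product of indecomposables'' to ``equal to an internal direct sum'' is handled at the same (brief) level of detail in the paper itself.
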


\begin{proof}
Let $\mathcal C$ be the full subcategory of the ordinary category $\mathsf{Mod}_R$ of $R$-modules and module homomorphisms whose objects are the $R$-modules with finite uniform dimension. It is a basic fact that $\mathsf{Mod}_R$ is a category with finite products: In particular, the terminal objects of $\mathsf{Mod}_R$ are the zero $R$-modules, and a canonical representative of the product of two $R$-modules $A$ and $B$ is their direct sum $A \oplus_R B$. Since the inclusion functor of $\mathcal C$ in $\mathsf{Mod}_R$  is fully faithful and, by \cite[Proposition 2.9.9]{Bor94}, fully faithful functors reflect limits, it follows by Eq.~\eqref{equ:4.3(3)} that $\mathcal C$, too, is a category with finite products (by additivity, the direct sum of two $R$-modules of finite uniform dimension is still an $R$-module of finite uniform dimension). 

On the other hand, if $\lambda$ is the function $\mathrm{Ob}(\mathcal C) \to \bf N$ that maps an $R$-module to its uniform dimension, then we also get from Eq.~\eqref{equ:4.3(3)} that $\lambda(A) + \lambda(B) \le \lambda(A \oplus_R B)$ for all $A, B \in \mathrm{Ob}(\mathcal C)$; moreover, $\lambda(A) = 0$ if and only if $A$ is a zero module (i.e., a terminal object of $\mathcal C$). Since $\lambda(A) = \lambda(B)$ when the $R$-mod\-ules $A$ and $B$ are isomorphic, we thus conclude from Corollary \ref{cor:4.13} (and the very definition of an indecomposable module) that every $R$-module is isomorphic and hence equal to a direct sum of indecomposables.
\end{proof}

By \cite[Corollary (6.7)(1)]{La99}, Corollary \ref{cor:4.14} generalizes the classical result that every artinian or noetherian module over a ring $R$ is an internal direct sum of indecomposable submodules. The corollary has a ``direct and simple'' proof all along the lines of the standard proof of the classical case, but the point here is rather that we obtained the result as an instance of an abstract ``object decomposition theorem'' (viz., Corollary \ref{cor:4.13}), in which we get to characterize the in\-decomposable $R$-modules as the atoms of a certain (reduced, unit-cancellative, commutative) monoid where the divisibility preorder is artinian. And by the same ``mechanical approach'', analogous conclusions can be made for other classes of objects.

\section{Closing remarks and open questions}
\label{sect:5}
Theorem \ref{thm:3.11} and its descendants work as a sort of black box for a variety of problems: The inputs of the black box are a monoid $H$ and an artinian preorder $\preceq$ on $H$; the output is the existence of certain factorizations for every ``large element'' of $H$, where an element is taken to be ``large'' if it is not $\preceq$-equivalent to the identity $1_H$ of $H$. In practice, if one's goal is to prove some kind of factorization theorem (as in the examples discussed in the previous sections), then the recipe set forth in this work consists of four steps (some of which are often trivial): 
\begin{enumerate}
	\item Build up a monoid $H$ that ``fits the factorization problem'' under consideration.
	\item Find a ``good candidate'' for the preorder $\preceq$.
	\item Prove that the preorder $\preceq$ is artinian.
	\item Characterize the $\preceq$-irreducibles of $H$. 
\end{enumerate}
One pro of the approach is that, similarly as with other top-down approaches, one can hope to bring ``factorization problems'' from ``distant areas'' under the umbrella of a unifying theory.

This said, there are many basic questions we could not answer. E.g., it follows from \cite[Theorem 4]{Gr51} that, if the ``divides from the left'' preorder and the ``divides from the right'' preorder are both noetherian, then also the divisibility preorder is noetherian. Moreover, we learned from Benjamin Steinberg on MathOverflow (see \texttt{\href{https://mathoverflow.net/questions/385422/}{https://mathoverflow.net/questions/385422/}}) that a monoid can satisfy both the ACCP and, say, the ACCPL without satisfying the ACCPR. However, we do not know whether a monoid $H$ satisfying both the ACCPR and the ACCPL does also satisfy the ACCP (cf.~Corollary \ref{cor:4.6} and Example \ref{exa:4.7}).
On a related note, is it true that every unit-cancellative monoid satisfying the ACCP is acyclic (cf.~Theorem \ref{thm:4.5})? Does an acyclic monoid $H$ defined by a presentation $\mathrm{Mon} \langle X \mid R \rangle$ with finitely many generators satisfy the ACCP? If not, what about a finite presentation? In Example \ref{exa:4.8}, we proved that the answer to the last question is negative if we drop the requirement that $H$ is acyclic and we ask in return that the monoid is reduced, cancellative, and atomic.

\section*{Acknowledgments}\label{subsec:acks}
I am indebted to Laura Cossu, Alfred Geroldinger, and Daniel Smertnig for many fruitful comments. Part of the paper was written in summer 2020, while I was on a research stay at the University of Graz, supported by the Austrian Science Fund FWF, Project No.~W1230. It was, however, in 2013 that the key ideas underlying the paper started growing, while I was still in France, moving back and forth between Paris 6, the \'Ecole Polytechnique, and Jean Monnet University in St-\'Etienne. Therefore, I also want to express my deepest gratitude to Alain Plagne and Fran\c{c}ois Hennecart for their support, at that time and afterwards. Last but not least, I am sincerely grateful to an anonymous referee for their incredibly careful reading of the manuscript and a wealth of constructive remarks.


\begin{thebibliography}{99}
%
\bibitem{Ad66} S.\,I.~Adian, \emph{Defining relations and algorithmic problems for groups and semigroups}, Trudy Mat.~Inst.~Steklov~\textbf{85} (1966), 3--123 (in Russian); Proc.~Steklov Inst.~Math.~\textbf{85} (1966), 1--152 (trans.~from the Russian by M.~Greendlinger).
%
\bibitem{AnVL96} D.\,D.~Anderson and S.~Valdes-Leon, \emph{Factorization in Commutative Rings with Zero Divisors}, Rocky Mountain J.~Math.~\textbf{26} (1996), No.~2, 439--480.
%
\bibitem{An-Tr18} A.\,A.~Antoniou and S.~Tringali, \emph{On the Arithmetic of Power Monoids and Sumsets in Cyclic Groups}, Pacific J.~Math.~\textbf{312} (2021), No.~2, 279--308.
%
\bibitem{BaGe14} N.\,R.~Baeth and A.~Geroldinger, \emph{Monoids of modules and arithmetic of direct-sum
decompositions}, Pacific J.~Math.~\textbf{271} (2014), No.~2, 257--319.
%
\bibitem{Ba-Sm20} N.\,R.~Baeth and D.~Smertnig, \emph{Lattices over Bass rings and graph agglomerations}, to appear in Algebr.~Represent.~Theory (\href{https://arxiv.org/abs/2006.10002}{\nolinkurl{arxiv.org/abs/2006.10002}}).
%
\bibitem{Ba-Sm15} N.\,R.~Baeth and D.~Smertnig, \emph{Factorization theory: From commutative to non-commutative settings}, J.~Algebra \textbf{441} (2015), 475--551.
%
\bibitem{BaWi13} N.\,R.~Baeth and R.~Wiegand, \emph{Factorization Theory and Decompositions of Modules}, Amer.~Math.~Monthly \textbf{120} (2013), No.~1, 3--34.
%
\bibitem{Bor94} F.~Borceux, \emph{Handbook of Categorical Algebra 1: Basic Category Theory},
Encycl.~Math.~Appl.~\textbf{50}, Cambridge Univ.~Press, 1994.
%
\bibitem{Co06} P.\,M.~Cohn, \emph{Free Ideal Rings and Localization in General Rings}, New Math.~Monogr.~\textbf{3}, Cambridge Univ.~Press, 2006.
%
\bibitem{Co67} P.\,M.~Cohn, \emph{Torsion modules over free ideal rings}, Proc.~London Math.~Soc., III.~Ser.~\textbf{17} (1967), 577--599.
%
\bibitem{Co64} P.\,M.~Cohn, \emph{Free ideal rings}, J.~Algebra \textbf{1} (1964), 47--69.
%
\bibitem{CoTr21}  L.~Cossu and S.~Tringali, \emph{Abstract Factorization Theorems with Applications to Idempotent Factorizations}, under review (\href{https://arxiv.org/abs/2108.12379}{\texttt{arXiv:2108.12379}})
%
\bibitem{CoZaZa18} L.~Cossu, P.~Zanardo, and U.~Zannier, \emph{Products of elementary matrices and non-Euclidean principal ideal domains}, J.~Algebra \textbf{501} (2018), 182--205.
%
\bibitem{Er68} J.\,A.~Erdos, \emph{On products of idempotent matrices}, Glasg.~Math.~J.~\textbf{8} (1967), 118--122.
%
\bibitem{Fa19} A.~Facchini, \emph{Semilocal Categories and Modules with Semilocal Endomorphism Rings}, Progr.~Math.~\textbf{331}, Birkh\"auser, 2019.
%
\bibitem{Fa02} A.~Facchini, \emph{Direct sum decomposition of modules, semilocal endomorphism rings, and Krull monoids}, J.~Algebra \textbf{256} (2002), No.~1, 280--307.
%
\bibitem{FaGeKaTr17} Y.~Fan, A.~Geroldinger, F.~Kainrath, and S.~Tringali, \emph{Arithmetic of commutative semigroups with a focus on semigroups of ideals and modules}, J.~Algebra Appl.~\textbf{16} (2017), No.~11, 42 pp.
%
\bibitem{Fa-Tr18} Y.~Fan and S.~Tringali, \emph{Power monoids: A bridge between Factorization Theory and Arithmetic Combinatorics}, J.~Algebra \textbf{512} (2018), 252--294.
%
\bibitem{Gal91} J.\,H.~Gallier, \emph{What's so special about Kruskal's theorem and the ordinal $\Gamma_0$? A survey of some results in proof theory}, Ann.~Pure Appl.~Logic \textbf{53} (1991), 199--260.
%
\bibitem{GeHK06} A.~Geroldinger and F.~Halter-Koch, \emph{Non-Unique Factorizations. Algebraic, Combinatorial and Analytic Theory}, Pure Appl.~Math.~\textbf{278}, Chapman \& Hall/CRC, 2006.
%
\bibitem{GeRe19} A.~Geroldinger and A.~Reinhart, \emph{The monotone catenary degree of monoids of ideals}, Internat.~J.~Algebra Comput.~\textbf{29} (2019), 419--457.
%
\bibitem{GeZh19} A.~Geroldinger and Q.~Zhong. \emph{Factorization theory in commutative monoids}, Semigroup Forum \textbf{100} (2020), 22--51.
%
\bibitem{Gr74} A.~Grams, \emph{Atomic rings and the ascending chain condition for principal ideals}, Math.~Proc.~Cambridge~Philos.~Soc.~\textbf{75} (1974), 321--329.
%
\bibitem{Gr51} J.\,A.~Green, \emph{On the Structure  of Semigroups}, Annals of Math.~\textbf{54} (1951) 163--172.
%
\bibitem{Ha06} P.~Harzheim, \emph{Ordered Sets}, Adv.~Math.~\textbf{7}, Springer, 2006.
%
\bibitem{Ho95} J.\,M.~Howie, \emph{Fundamentals of Semigroup Theory}, London Math.~Soc.~Monogr., New Ser.~\textbf{12}, Clarendon Press, 1995.
%
\bibitem{La99} T.\,Y.~Lam, \emph{Lectures on Modules and Rings}, Grad.~Texts in Math.~\textbf{189}, Springer, 1999.
%
\bibitem{La01} T.\,Y.~Lam, \emph{A First Course in Noncommutative Rings}, Grad.~Texts in Math.~\textbf{131}, Springer, 2001 (2nd edition).
%
\bibitem{MacLane98} S.~Mac Lane, \emph{Categories for the Working Mathematician}, Grad.~Texts in Math.~\textbf{5}, Springer, 1998 (2nd edition).
%
\bibitem{Ma95} G.~Mackiw, \emph{Permutations as Products of Transpositions}, Amer.~Math.~Monthly \textbf{102} (1995), No.~5, 438--440.
%
\bibitem{Ma-Zi-09} R.~Mazurek and M.~Ziembowski, \emph{The ascending chain condition for principal left or right ideals of skew generalized power series rings}, J.~Algebra \textbf{322} (2009), 983--994.
%
\bibitem{Rot06} J.\,J.~Rotman, \emph{A First Course in Abstract Algebra with Applications}, Prentice Hall, 2006 (3rd edition).
%
\bibitem{Tar38} A.~Tarski, \emph{\"Uber unerreichbare Kardinalzahlen}, Fund.~Math.~\textbf{30} (1938), 68--89.
%
\bibitem{WiWi09} R.~Wiegand and S.~Wiegand, ``Semigroups of modules: A survey'', pp.~335--349 in N.\,V.~Dung, F.~Guerriero, L.~Hammoudi, and P.~Kanwar (eds.), \emph{Rings, Modules and Representations}, Contemp.~Math.~\textbf{480}, Amer.~Math.~Soc., 2009.
%
\end{thebibliography}
\end{document}